\documentclass[reqno]{amsart}
\usepackage{amsmath,amsthm,amssymb,hyperref,graphicx,mathrsfs,mathtools} 

\newtheorem{theorem}{Theorem}
\newtheorem{corollary}{Corollary}
\newtheorem{lemma}{Lemma}

\newtheorem{remark}{Remark}

\pagestyle{myheadings}

\allowdisplaybreaks
\title[Bernstein-type $L_p$ inequalities]{Certain Bernstein-type $L_p$ inequalities for polynomials}

\begin{document}
\author{N. A. Rather$^1$}
\author{Aijaz Bhat$^2$}
\author{Suhail Gulzar$^3$}

\address{$^{1,2}$Department of Mathematics, University of Kashmir, Srinagar-190006, India}
\address{$^3$Government College of Engineering and Technology, Kashmir}
\email{$^1$dr.narather@gmail.com, $^2$aijazb824@gmail.com,$^3$Suhail Gulzar}

\begin{abstract}
Let $P(z)$ be a polynomial of degree $n,$ then it is known that for $\alpha\in\mathbb{C}$ with $|\alpha|\leq \frac{n}{2},$ 
\begin{align*}
\underset{|z|=1}{\max}|\left|zP^{\prime}(z)-\alpha P(z)\right|\leq \left|n-\alpha\right|\underset{|z|=1}{\max}|P(z)|.
\end{align*}
This inequality includes Bernstein's inequality, concerning the estimate for $|P^\prime(z)|$ over $|z|\leq 1,$ as a special case. In this paper, we extend this inequality to $L_p$ norm which among other things shows that the condition on $\alpha$ can be relaxed. We also prove similar inequalities for polynomials with restricted zeros. 
\end{abstract}
\maketitle

\footnotetext{\textbf{Keywords} :$L^{p}$-inequalities, Bernstein's inequality, polynomials}
\footnotetext{\textbf{AMS Mathematics Subject Classification(2020)}: 30A10, 30C10, 41A17}

\section{Introduction and Main Results}
Let $\mathcal P_{n}$ denote the set of complex polynomials of degree  $n.$ If $P\in\mathcal{P}_n,$ then according to Bernstein's inequality concerning the estimate of $|P(z)|$ over $|z|\leq 1,$ we have
\begin{align*}
\left\|P^\prime(z)\right\|_{\infty} \leq n \left\|P(z)\right\|_{\infty}, \quad \text{where}\,\,\, \left\|P\right\|_{\infty}:=\underset{\left|z \right|=1}{\max}\left|P(z)\right|.
\end{align*}
The inequality is sharp and becomes equality if $P(z)=c~z^n,$ $c\neq 0.$ This inequality of Bernstein has been generalized and extended in several ways, for more details refer to \cite[p. 508]{QIGS}. Under the same hypothesis, V.K. Jain \cite{jain} proved that if $\alpha\in\mathbb{C}$ with $|\alpha|\leq \frac{n}{2},$ then 
\begin{align}\label{je}
\left\|zP^{\prime}(z)-\alpha P(z)\right\|_\infty \leq \left|n-\alpha\right|\left\|P(z)\right\|_\infty.
\end{align}
Zygmund \cite{zyg} extended Bernstein's inequality in $L_p$-norm and proved that if $P\in\mathcal{P}_n,$   then
\begin{equation}\label{ie1}
\Vert P^{\prime}(z)\Vert_{p}\leq n\Vert P(z)\Vert_{p}\quad \text{for} \,\,\, p\geq 1
\end{equation}
where
\[\left\|P(z)\right\|_{p}:=\left(\frac{1}{2\pi}\int_{0}^{2\pi}\left|
P(e^{i\theta})\right|^{p}d\theta\right)^{1/p},\, 0<p<\infty.\]
It is well known that the supremum norm satisfies $\left\|P\right\|_{\infty}=\underset{p\to\infty}{\lim}\left\|P\right\|_{p}  .$ The Mahler measure (see \cite{mah1}) of a polynomial $P(z)$ with complex coefficients is defined as
$\left\|P(z)\right\|_{0}:=\exp\left(\frac{1}{2\pi}\int_{0}^{2\pi}\log\left|P(e^{i\theta})\right|d\theta\right).$ This also satisfy the limiting case $\underset{p\to 0}{\lim}\left\|P\right\|_{p}=\|P\|_0.$

It was not known for quite a long time that whether the inequality \eqref{ie1} is true for $0<p<1.$ or not. Finally, it was Arestov \cite{ART} who came up with some remarkable results which among other things proved that the inequality \eqref{ie1} remains valid for
$0\leq p<1$ as well. 

Recently in \cite{sg}, the inequality \eqref{je} of Jain was extended to $L_p$-norm and it was proved that if $P\in\mathcal{P}_n$ and $\alpha\in\mathbb{C}$ with $|\alpha|\leq n/2,$ then
\begin{align}\label{ie2}
\left\|zP^{\prime}(z)-\alpha P(z)\right\|_p \leq \left|n-\alpha\right|\left\|P(z)\right\|_p\quad \text{for}\,\, p\geq 0.
\end{align}
It is not clear from the proofs of \eqref{je} and \eqref{ie2} that whether the condition on $\alpha$ is indeed necessary. In this paper, we have been able to relax the condition on $\alpha$ and have proved the following theorem.
\begin{theorem}\label{t1}
If $P\in \mathcal{P}_n$,  then for every $\alpha,\gamma \in\mathbb{C}$ with $\Re(\alpha)\leq \frac{n}{2}$, $\Re(\gamma)\leq \frac{n}{2}$  and $0\leq p<\infty$
\begin{align}\label{tlel}
\left\|zP^\prime(z)-\alpha P(z)\right\|_p\leq \left|n-\alpha\right|\left\|P(z)\right\|_p
\end{align}
and
\begin{align}\label{t1ec}
\left\|z^2P^{\prime\prime}(z)-(1-\alpha-\gamma )zP^\prime(z)+\alpha\gamma P(z)\right\|_p\leq \left|n(n-\alpha-\gamma )+\alpha\gamma\right|\left\|P(z)\right\|_p.
\end{align}
The results are best possible and equality in \eqref{tlel} and \eqref{t1ec} holds for $P(z)=cz^n, ~ c \neq 0.$
\end{theorem}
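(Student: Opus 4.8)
The plan is to establish \eqref{tlel} first and then deduce \eqref{t1ec} from it by composing two first-order operators. Write $B_\beta[P](z):=zP'(z)-\beta P(z)$; this is linear, diagonal in the monomial basis with $B_\beta[z^j]=(j-\beta)z^j$, and (since $\Re(\beta)\le n/2$ forces $\beta\neq n$) it maps $\mathcal P_n$ into itself without lowering degree. Comparing the action on each $z^j$ identifies the operator on the left of \eqref{t1ec} with the composition $B_\gamma\circ B_\alpha$, because $B_\gamma[B_\alpha[z^j]]=(j-\gamma)(j-\alpha)z^j$. Granting \eqref{tlel}, I would apply it to $P$ with parameter $\alpha$ (using $\Re(\alpha)\le n/2$) to get $\|B_\alpha[P]\|_p\le|n-\alpha|\,\|P\|_p$, and then to $B_\alpha[P]\in\mathcal P_n$ with parameter $\gamma$ (using $\Re(\gamma)\le n/2$) to get $\|B_\gamma[B_\alpha[P]]\|_p\le|n-\gamma|\,\|B_\alpha[P]\|_p$. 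Chaining the two bounds gives the constant $|n-\alpha|\,|n-\gamma|=|(n-\alpha)(n-\gamma)|=|n(n-\alpha-\gamma)+\alpha\gamma|$, exactly the factor in \eqref{t1ec}.

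Everything therefore rests on \eqref{tlel}, and the crucial observation is that the hypothesis needed there is $\Re(\alpha)\le n/2$ rather than $|\alpha|\le n/2$. The mechanism is the elementary equivalence
\[
\Re(\alpha)\le\frac n2 \iff 2n\,\Re(\alpha)\le n^2 \iff |\alpha|^2\le |\alpha|^2-2n\,\Re(\alpha)+n^2=|\alpha-n|^2,
\]
that is, $\Re(\alpha)\le n/2\iff|\alpha|\le|\alpha-n|$. Equivalently, the single zero $z=\alpha/2$ of the symbol $u(z):=2z-\alpha$ of $B_\alpha$ lies in the half-plane $|z|\le|z-n/2|$. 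This is precisely the condition under which $B_\alpha$ is a Bernstein-type operator, and it is what lets the zero-location arguments below run under the weaker hypothesis.

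I would then prove \eqref{tlel} in two linked steps. First, using Laguerre's theorem (the Grace--Szeg\H{o} coincidence theorem) applied to $u$, I would show that $|\alpha|\le|\alpha-n|$ makes $B_\alpha$ preserve the class of polynomials with all zeros in the closed unit disk. This zero-preservation immediately yields the endpoint $p=\infty$ bound $\max_{|z|=1}|B_\alpha[P](z)|\le|n-\alpha|\,\|P\|_\infty$ by the familiar Rouch\'e device: for $|\lambda|>1$ the polynomial $\lambda z^n-P/\|P\|_\infty$ has all its zeros in $|z|<1$, hence so does its image $\lambda(n-\alpha)z^n-B_\alpha[P]/\|P\|_\infty$, which is therefore nonvanishing on $|z|\ge1$; letting $|\lambda|\to1$ gives the bound. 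Second, I would feed the same zero-preservation property into Arestov's theorem \cite{ART}: an operator of the form $\sum_j\lambda_j a_jz^j$ that maps polynomials with zeros in $\overline{\mathbb D}$ to polynomials with zeros in $\overline{\mathbb D}$ is admissible, so that $\int_0^{2\pi}\phi(|B_\alpha[P](e^{i\theta})|)\,d\theta\le\int_0^{2\pi}\phi(|n-\alpha|\,|P(e^{i\theta})|)\,d\theta$ for every $\phi(x)=\psi(\log x)$ with $\psi$ convex nondecreasing. Taking $\phi(x)=x^p$ (and $\phi(x)=\log x$ at the Mahler-measure endpoint) converts this into \eqref{tlel} for all $0\le p<\infty$.

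I expect the main obstacle to be exactly the transfer of the relaxed scalar condition into the zero-location/admissibility statement: one must verify that $|\alpha|\le|\alpha-n|$ alone suffices for the coincidence-theorem argument to place the zeros of $B_\alpha[F]$ correctly and hence for $B_\alpha/(n-\alpha)$ to meet Arestov's admissibility requirement uniformly over $\mathcal P_n$. Once that is secured the passage to every $0\le p<\infty$ is automatic. Sharpness is then immediate by testing $P(z)=cz^n$: there $B_\alpha[z^n]=(n-\alpha)z^n$ and $B_\gamma[B_\alpha[z^n]]=(n-\alpha)(n-\gamma)z^n$, giving equality in both \eqref{tlel} and \eqref{t1ec}.
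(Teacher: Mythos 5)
Your proposal is correct, and it rests on the same two pillars as the paper's own proof: the zero-preservation property of $B_\alpha[P]=zP'-\alpha P$ under $\Re(\alpha)\le n/2$ (the paper's Lemma~\ref{l2}), fed into Arestov's theorem (Lemma~\ref{16}) with $\phi(x)=x^p$ and $c(\delta)=\max(|\alpha|,|n-\alpha|)=|n-\alpha|$, together with the composition $B_\gamma\circ B_\alpha$ to obtain the second-order inequality (the paper does exactly this, ``replacing $P(z)$ by $zP'(z)-\alpha P(z)$''). Where you genuinely diverge is in the middle of the argument, and your route is leaner: you apply Arestov's inequality directly to $P$, whereas the paper first flips the zeros of $P$ lying outside the unit circle via the Blaschke-type product $T(z)=P(z)\prod_{j}(1-\overline{z_j}z)/(z-z_j)$ and then needs its comparison Lemma~\ref{l3} to pass from $P$ to $T$ before applying Arestov to $T$. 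Since admissibility is a property of the operator alone, and Arestov's conclusion then holds for every $P\in\mathcal P_n$, that whole detour is logically redundant, and your version makes this visible. Conversely, the paper is more concrete where you are vague: its Lemma~\ref{l2} is proved by the elementary estimate $\Re\bigl(wf'(w)/f(w)\bigr)>n/2\ge\Re(\alpha)$ for $|w|>r$, while you outsource the zero-preservation to Laguerre/Grace--Szeg\H{o} with a ``symbol'' $2z-\alpha$ that you never pin down; to make that step airtight, apply Szeg\H{o}'s composition theorem with the correct symbol $B_\alpha[(1+z)^n]=(1+z)^{n-1}\bigl((n-\alpha)z-\alpha\bigr)$, whose only nontrivial zero $\alpha/(n-\alpha)$ lies in the closed unit disk precisely when $|\alpha|\le|n-\alpha|$, i.e.\ $\Re(\alpha)\le n/2$. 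Two further remarks: (a) both you and the paper use ``admissible'' in the sense of preservation of zeros in the closed unit disk for polynomials of degree exactly $n$ --- this qualification should be stated, since for $\Re(\alpha)>1/2$ the operator moves the zero of the lower-degree polynomial $z-a$, $|a|=1$, to $-\alpha a/(1-\alpha)$, which lies outside the disk; (b) your composition produces $z^2P''+(1-\alpha-\gamma)zP'+\alpha\gamma P$ with a plus sign, which is indeed what the paper's lemmas and proof actually bound, so the minus sign in the displayed statement of \eqref{t1ec} is a typo of the paper, and your constant $|n-\alpha|\,|n-\gamma|=|n(n-\alpha-\gamma)+\alpha\gamma|$ is the correct one.
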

\begin{remark}
Consider $P(z)=2z^n+1$ and $p=2,$ then
\begin{align*}
\left\|zP^\prime(z)-\alpha P(z)\right\|_p=\left\|2(n-\alpha)z^n-\alpha\right\|_2=\sqrt{4|n-\alpha|^2+|\alpha|^2}
\end{align*}
and 
\begin{align*}
\left|n-\alpha\right|\left\|P(z)\right\|_p&=|n-\alpha|\|2z^n+1\|_2=\sqrt{5}\left|n-\alpha\right|.
\end{align*}
Thus,
\begin{align*}
&\left\|zP^\prime(z)-\alpha P(z)\right\|_p\leq \left|n-\alpha\right|\left\|P(z)\right\|_p\\  \Leftrightarrow &~~4|n-\alpha|^2+|\alpha|^2 \leq 5|n-\alpha|^2 \Leftrightarrow |\alpha|\leq |n-\alpha| \Leftrightarrow \Re(\alpha)\leq \frac{n}{2}.
\end{align*}
This shows that the condition on $\alpha$ in \eqref{tlel} cannot be relaxed furthermore.
\end{remark}
\begin{remark}
If we let $p\to\infty,$ one can conclude that the inequality \eqref{je} holds for all $\alpha\in\mathbb{C}$ with $\Re(\alpha)\leq\frac{n}{2}.$ If we take $\alpha=0$ in \eqref{tlel} we obtain Zygmund's inequality \eqref{ie1}.
\end{remark}
 Taking $\alpha =\gamma =\frac{n}{2}$ in \eqref{tlel} and \eqref{t1ec}, we get the following result:
 \begin{corollary}\label{c1}
 Let $P\in\mathcal{P}_n,$ then
 \begin{align*}
 \left\|zP^\prime(z)-\frac{n}{2} P(z)\right\|_p\leq \frac{n}{2}\left\|P(z)\right\|_p
 \end{align*}
 and
 \begin{align*}
 \left\|z^2P^{\prime\prime}(z)+(1-n)zP^\prime(z)-\frac{n^2}{4} P(z)\right\|_p\leq \frac{n^2}{4}\left\|P(z)\right\|_p.
 \end{align*}
 The results are sharp and equality holds for $P(z)=cz^n, ~ c \neq 0.$
 \end{corollary}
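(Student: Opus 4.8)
The plan is to obtain both inequalities of Corollary \ref{c1} directly from Theorem \ref{t1} by specializing the free parameters to $\alpha=\gamma=\tfrac{n}{2}$. The first thing to record is that this choice is admissible: since $n$ is a positive integer, $\tfrac{n}{2}$ is real and $\Re\!\left(\tfrac{n}{2}\right)=\tfrac{n}{2}\le\tfrac{n}{2}$, so both hypotheses $\Re(\alpha)\le\tfrac{n}{2}$ and $\Re(\gamma)\le\tfrac{n}{2}$ hold; the corollary is precisely the boundary case of the constraint, which is allowed because the inequality in the hypothesis is non-strict. With $\alpha=\tfrac{n}{2}$, inequality \eqref{tlel} reads $\left\|zP'(z)-\tfrac{n}{2}P(z)\right\|_p\le\left|n-\tfrac{n}{2}\right|\left\|P(z)\right\|_p$, and since $\left|n-\tfrac{n}{2}\right|=\tfrac{n}{2}$ is a positive real, this is exactly the first displayed inequality.

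For the second inequality I would set $\alpha=\gamma=\tfrac{n}{2}$ in \eqref{t1ec} and simplify the three quantities that appear, namely
$$1-\alpha-\gamma=1-n,\qquad \alpha\gamma=\frac{n^2}{4},\qquad n(n-\alpha-\gamma)+\alpha\gamma=n\cdot 0+\frac{n^2}{4}=\frac{n^2}{4}.$$
Substituting the first two into the second-order differential operator of \eqref{t1ec} and the third (whose modulus is again a positive real) into the right-hand constant yields the second displayed inequality with bounding constant $\tfrac{n^2}{4}$. Equivalently, this inequality can be read as a two-fold application of the first: the polynomial $Q(z)=zP'(z)-\tfrac{n}{2}P(z)$ lies in $\mathcal{P}_n$, and applying \eqref{tlel} to $Q$ once more with parameter $\tfrac{n}{2}$ gives $\left\|zQ'(z)-\tfrac{n}{2}Q(z)\right\|_p\le\tfrac{n}{2}\|Q\|_p\le\tfrac{n^2}{4}\|P\|_p$; expanding $zQ'(z)-\tfrac{n}{2}Q(z)$ recovers the second-order operator and makes transparent why the constant is $\tfrac{n^2}{4}=\left(\tfrac{n}{2}\right)^2$.

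There is no genuine obstacle here beyond careful bookkeeping, since the statement is a specialization of a result already in hand; the only points demanding attention are confirming that $\alpha=\gamma=\tfrac{n}{2}$ satisfies the boundary hypothesis of Theorem \ref{t1} and checking that the complex-modulus constants $\left|n-\alpha\right|$ and $\left|n(n-\alpha-\gamma)+\alpha\gamma\right|$ collapse to the real numbers $\tfrac{n}{2}$ and $\tfrac{n^2}{4}$. Finally, sharpness is inherited from Theorem \ref{t1}: testing $P(z)=cz^n$ gives $zP'(z)-\tfrac{n}{2}P(z)=\tfrac{n}{2}cz^n$, while the second-order operator applied to $cz^n$ is a constant multiple of $z^n$ of modulus $\tfrac{n^2}{4}|c|$, so both inequalities hold with equality and the constants $\tfrac{n}{2}$ and $\tfrac{n^2}{4}$ cannot be improved.
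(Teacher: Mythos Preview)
Your proposal is correct and follows exactly the paper's approach: the paper obtains Corollary~\ref{c1} simply by taking $\alpha=\gamma=\tfrac{n}{2}$ in Theorem~\ref{t1}, which is precisely what you do, with the added (and welcome) observation that the second inequality can also be seen as a two-fold application of the first. Your verification that the boundary choice $\Re(\alpha)=\tfrac{n}{2}$ is admissible and your check of sharpness at $P(z)=cz^n$ are both fine.
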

The next corollary follows by taking $\alpha =1$ in \eqref{tlel} and $\gamma=0$  in \eqref{t1ec}.
\begin{corollary}\label{c2}
 Let $P\in\mathcal{P}_n,$ then
 \begin{align*}
 \left\|zP^\prime(z)- P(z)\right\|_p\leq \left|n-1\right|\left\|P(z)\right\|_p
 \end{align*}
 and
 \begin{align*}
 \left\|z^2P^{\prime\prime}(z)+(1-\alpha)zP^\prime(z)\right\|_p\leq n\left|n-\alpha\right|\left\|P(z)\right\|_p \quad (\textnormal{for} \,\, n\geq 2).
 \end{align*}
 The results are best possible and equality holds for $P(z)=cz^n, ~ c \neq 0.$
 \end{corollary}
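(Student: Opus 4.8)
The plan is to derive both inequalities of Corollary~\ref{c2} as immediate specializations of Theorem~\ref{t1}; no fresh analysis is needed, since the operators appearing in the corollary are literally the parameterized operators of \eqref{tlel} and \eqref{t1ec} evaluated at distinguished values. For the first inequality I would set $\alpha=1$ in \eqref{tlel}. Then $zP'(z)-\alpha P(z)$ becomes $zP'(z)-P(z)$ and the constant $|n-\alpha|$ becomes $|n-1|$, so \eqref{tlel} reads exactly $\|zP'(z)-P(z)\|_p\le |n-1|\,\|P(z)\|_p$. The only hypothesis to check is $\Re(\alpha)=\Re(1)=1\le \frac{n}{2}$, which is precisely the requirement $n\ge 2$.

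For the second inequality I would set $\gamma=0$ in \eqref{t1ec}. The condition $\Re(\gamma)=0\le \frac{n}{2}$ is then automatic, so only $\Re(\alpha)\le \frac{n}{2}$ survives as a constraint. Substituting $\gamma=0$ makes the zeroth-order term $\alpha\gamma P(z)$ vanish and reduces the first-order coefficient to $1-\alpha$, leaving the operator $z^2P''(z)+(1-\alpha)zP'(z)$; simultaneously the constant $|n(n-\alpha-\gamma)+\alpha\gamma|$ collapses to $|n(n-\alpha)|=n\,|n-\alpha|$, the last equality using that $n$ is a positive integer. This is exactly the asserted bound, and the stated restriction $n\ge 2$ again guarantees that the admissible range $\Re(\alpha)\le\frac{n}{2}$ is nondegenerate and that the genuinely second-order operator is actually present.

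To confirm sharpness I would test $P(z)=cz^n$, for which $zP'(z)=nP(z)$ and $z^2P''(z)=n(n-1)P(z)$. The first operator then sends $cz^n$ to $c(n-1)z^n$ and the second sends $cz^n$ to $cn(n-\alpha)z^n$, so using $\|z^n\|_p=1$ both sides of each inequality coincide, giving equality. The content here is bookkeeping rather than analysis, and the one point I would be careful about is the transport of hypotheses: both specializations quietly require $n\ge 2$ (for the first inequality because $\alpha=1$ must satisfy $\Re(\alpha)\le \frac{n}{2}$), so the corollary is to be read with that understanding throughout. All of the analytic weight is carried by Theorem~\ref{t1}.
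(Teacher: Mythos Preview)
Your proposal is correct and follows exactly the paper's approach: the paper derives Corollary~\ref{c2} simply by taking $\alpha=1$ in \eqref{tlel} and $\gamma=0$ in \eqref{t1ec}. Your additional care in noting that the specialization $\alpha=1$ already forces $n\ge 2$ (via $\Re(1)\le n/2$) and in verifying sharpness on $P(z)=cz^n$ goes slightly beyond what the paper spells out, but the underlying argument is identical.
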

 If $P\in \mathcal{P}_n$ and does not vanish in $|z|\leq 1,$ then it was conjectured by Erd\"{o}s and later proved by Lax \cite{lax} that the factor $n$ in the right side of Bernstein's inequality can be replaced by $n/2.$ Jain \cite{jain} also generalized this inequality and proved that if $\alpha\in\mathbb{C}$ with $|\alpha|\leq \frac{n}{2},$ then 
\begin{align}\label{je2}
\left\|zP^{\prime}(z)-\alpha P(z)\right\|_\infty\leq \dfrac{\left|n-\alpha\right|+\left|\alpha\right|}{2}\left\|P(z)\right\|_\infty.
\end{align}
 Next, we present the following result which not only includes an $L_p$-norm extension of above inequality but also extends the range of $\alpha.$
 \begin{theorem}\label{t2}
If $P\in \mathcal{P}_n$ and $P(z)$ does not vanish in $|z|\leq 1$, then for every $\alpha,\gamma\in\mathbb{C}$ with $\Re(\alpha)\leq \frac{n}{2}$, $\Re(\gamma)\leq \frac{n}{2}$ and $0\leq p<\infty$ 
\begin{align}\label{cel} 
\left\|zP^\prime(z)-\alpha P(z)\right\|_p \leq \left\|\left(n-\alpha\right)z+\alpha \right\|_p \frac{\left\|P(z)\right\|_p}{\left\|1+z\right\|_p}
\end{align} 
and
\begin{align}\label{sss}
\left\|z^2P^{\prime\prime}(z)-(1-\alpha-\gamma )zP^\prime(z)+\alpha\gamma P(z)\right\|_p\leq \left\|n(n-\alpha-\gamma )z+\alpha\gamma\right\|_p\frac{\left\|P(z)\right\|_p}{\left\|1+z\right\|_p}.
\end{align}
 The results are best possible and equality holds for $P(z)=az^n+b, ~|a|=|b|\neq 0$.
\end{theorem}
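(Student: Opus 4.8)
The plan is to reduce both inequalities of Theorem~\ref{t2} to a single pointwise estimate on the unit circle, which is then promoted to an $L_p$ statement via Arestov's theorem. Write $Q(z)=z^n\overline{P(1/\bar z)}$ for the reciprocal polynomial; since $P$ has no zeros in $|z|\le1$, all zeros of $Q$ lie in $|z|<1$ and $|Q(z)|=|P(z)|$ on $|z|=1$. Both operators in \eqref{cel} and \eqref{sss} are multiplier operators $\Lambda P(z)=\sum_{k=0}^n\lambda_k a_k z^k$: the first is $B_\alpha P=zP'-\alpha P$ with symbol $\lambda_k=k-\alpha$ (so $\lambda_0=-\alpha$, $\lambda_n=n-\alpha$), while $z^2P''-(1-\alpha-\gamma)zP'+\alpha\gamma P$ has symbol $(k-\alpha)(k-\gamma)$, whence $\lambda_0=\alpha\gamma$ and $\lambda_n=(n-\alpha)(n-\gamma)=n(n-\alpha-\gamma)+\alpha\gamma$. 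I would therefore prove one lemma for an admissible $\Lambda$ and then specialize.

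The heart of the matter is the pointwise inequality $|\Lambda P(z)|\le|\Lambda Q(z)|$ for $|z|=1$. A direct computation gives the reciprocal identity $z^n\overline{\Lambda Q(z)}=\widetilde\Lambda P(z)$ on $|z|=1$, where $\widetilde\Lambda$ has symbol $\overline{\lambda_{n-k}}$; for the first-order operator this reads $|B_\alpha Q(z)|=|zP'(z)-(n-\bar\alpha)P(z)|$. Putting $w=zP'(z)/P(z)$ where $P(z)\ne0$, the claim becomes $|w-\alpha|\le|w-(n-\bar\alpha)|$, and expanding yields $|w-(n-\bar\alpha)|^2-|w-\alpha|^2=(n-2\Re\alpha)(n-2\Re w)$. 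Since every zero $z_j$ of $P$ has $|z_j|>1$, writing $w=\sum_j z/(z-z_j)=\sum_j(1-z_j\bar z)^{-1}$ on $|z|=1$ and using $|z_j\bar z|>1$ gives $\Re w\le n/2$; together with $\Re\alpha\le n/2$ the product is nonnegative. This is exactly where the hypothesis on $\alpha$ is consumed, and the Remark following Theorem~\ref{t1} shows it is sharp. The analogous comparison for the quadratic symbol uses the two inequalities $|\alpha|\le|n-\alpha|$ and $|\gamma|\le|n-\gamma|$, whose product yields $|\lambda_0|\le|\lambda_n|$.

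To pass from the pointwise bound to \eqref{cel} and \eqref{sss}, I would invoke Arestov's theorem for admissible multiplier operators with the convex, nondecreasing weight $x\mapsto x^p$ (covering $0\le p<\infty$, including the limiting Mahler-measure case $p=0$). For $P$ having no zeros in $|z|<1$ this produces the constant $\|\lambda_n z+\lambda_0\|_p/\|1+z\|_p$, the extremal within the zero-free class being the boundary polynomial $z^n+1$, which is the source of the normalizing factor $\|1+z\|_p$; specialization gives precisely the right-hand sides of \eqref{cel} and \eqref{sss}. Sharpness is verified directly on $P(z)=az^n+b$ with $|a|=|b|$: there $\Lambda P=\lambda_n az^n+\lambda_0 b$, and since $\|cz^n+d\|_p=\|cz+d\|_p$ for constants $c,d$ one obtains equality.

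The main obstacle is the second-order case. The tempting shortcut---applying \eqref{cel} to $B_\gamma P$ and then again with $\alpha$---fails, because $B_\gamma P=zP'-\gamma P$ need not be zero-free in $|z|\le1$ even when $P$ is, so the restricted-zero hypothesis is unavailable at the second stage. I would instead treat $z^2P''-(1-\alpha-\gamma)zP'+\alpha\gamma P$ as a single multiplier operator with symbol $(k-\alpha)(k-\gamma)$ and verify its admissibility, equivalently the pointwise comparison $|\Lambda P|\le|\Lambda Q|$, directly rather than through composition. Establishing this pointwise inequality for the quadratic symbol, and confirming that Arestov's theorem delivers the $L_p$-mean constant $\|\lambda_n z+\lambda_0\|_p$ in place of the cruder $\max(|\lambda_0|,|\lambda_n|)$, are the two points that require the most care; the hypothesis $\Re\gamma\le n/2$ enters precisely through $|\gamma|\le|n-\gamma|$.
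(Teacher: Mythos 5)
Your pointwise ingredient for the first-order operator is correct: the identity $|zQ'(z)-\alpha Q(z)|=|zP'(z)-(n-\bar\alpha)P(z)|$ on $|z|=1$ and the expansion $|w-(n-\bar\alpha)|^2-|w-\alpha|^2=(n-2\Re\alpha)(n-2\Re w)$ with $\Re w\le n/2$ is a legitimate alternative to the paper's Lemma~\ref{l3}. But the step you defer to the end --- ``confirming that Arestov's theorem delivers the $L_p$-mean constant $\|\lambda_n z+\lambda_0\|_p$ in place of the cruder $\max(|\lambda_0|,|\lambda_n|)$'' --- is not a point of care; it is the entire content of Theorem~\ref{t2}, and Arestov's theorem as it stands (Lemma~\ref{16}) cannot deliver it. For an admissible operator, Lemma~\ref{16} gives only the constant $c(\delta)=\max(|\delta_0|,|\delta_n|)$; applied to $zP'-\alpha P$ this yields $|n-\alpha|\,\|P\|_p$, i.e.\ Theorem~\ref{t1} again, not the smaller ratio $\|(n-\alpha)z+\alpha\|_p/\|1+z\|_p$. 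There is no ``zero-free class'' version of Arestov's theorem to invoke; the refinement must be manufactured, and the paper does so by a de Bruijn-type averaging argument: form the auxiliary operator $P\mapsto\left(zP'(z)-\alpha P(z)\right)e^{i\beta}+z^n\overline{M(1/\overline{z})}$ with $M(z)=zQ'(z)-\alpha Q(z)$, prove it is admissible (Lemma~\ref{15}, which needs Lemma~\ref{l1} and the pointwise comparison), apply Lemma~\ref{16} for each fixed $\beta$ to get the constant $|(n-\alpha)e^{i\beta}-\bar\alpha|$, then integrate over $\beta\in[0,2\pi]$ and use that $t\mapsto\int_0^{2\pi}|t+e^{i\beta}|^p\,d\beta$ is minimized at $t=1$ for $t\ge 1$, taking $t=|M(e^{i\theta})/F(\theta)|\ge 1$ (the pointwise comparison again). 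This rotation-averaging is exactly what produces $\|1+z\|_p$ on the left and $\|(n-\alpha)z+\alpha\|_p$ on the right; without it your argument proves nothing beyond Theorem~\ref{t1} restricted to zero-free polynomials.

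The second-order inequality \eqref{sss} has the same gap plus one more: you never actually establish the pointwise comparison $|\Lambda P|\le|\Lambda Q|$ for the quadratic symbol --- the inequality $|\lambda_0|\le|\lambda_n|$ between extreme coefficients says nothing about values on $|z|=1$. You also dismissed composition too quickly. It is true that $zP'-\gamma P$ need not be zero-free when $P$ is, but the paper iterates at the level of the comparison lemma, not of the theorem: in Lemma~\ref{l3} the majorant $F$ has all its zeros in $|z|\le 1$, this property is inherited by $K(z)=zF'(z)-\alpha F(z)$ via Lemma~\ref{l2}, and $|H(z)|\le|K(z)|$ on $|z|=1$ with $H(z)=zP'(z)-\alpha P(z)$, so Lemma~\ref{l3} applies again to the pair $(H,K)$ and yields the quadratic comparison \eqref{sel} with no extra work. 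That comparison then feeds into the same averaging machinery with $N(z)=z^2Q''(z)+(1-\alpha-\gamma)zQ'(z)+\alpha\gamma Q(z)$ in place of $M$. Both of the ``points requiring the most care'' that you flag are therefore genuine missing steps, and the second one is where the theorem's distinctive constant actually comes from.
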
 
\begin{remark}
Inequality \eqref{je2} follows from \eqref{cel} by letting $p\to\infty,$ which also shows that \eqref{je2} holds for $\Re(\alpha)\leq \frac{n}{2}.$ If we take $\alpha=0$ in \eqref{cel} , it reduces to $L_p$-norm extension of Erd\"{o}s-Lax inequality \cite{lax} due to de Bruijn \cite{DB}.
\end{remark}
The following Corollary is obtained from Theorem \ref{t2} by taking $\alpha =1$ in \eqref{cel} and $\gamma=0$ in \eqref{sss}.
\begin{corollary}
If $P\in \mathcal{P}_n$ and $P(z)$ does not vanish in $|z|\leq 1,$ then
\begin{align*}
\left\|zP^\prime(z)- P(z)\right\|_p \leq \left\|\left(n-1\right)z+1 \right\|_p \frac{\left\|P(z)\right\|_p}{\left\|1+z\right\|_p}.
\end{align*} 
and
\begin{align*}
\left\|z^2P^{\prime\prime}(z)-(1-\alpha )zP^\prime(z)\right\|_p\leq n\left|n-\alpha \right|\frac{\left\|P(z)\right\|_p}{\left\|1+z\right\|_p}.
\end{align*}
\end{corollary}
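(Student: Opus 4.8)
The plan is to route both inequalities through the conjugate reciprocal \(Q(z)=z^{n}\overline{P(1/\bar z)}\), for which \(|P(z)|=|Q(z)|\) on \(|z|=1\) and hence \(\|P\|_{p}=\|Q\|_{p}\). Writing \(B_{\alpha}P:=zP'-\alpha P\) and letting \(\ast\) denote the degree-\(n\) conjugate reciprocal, I would first record the two facts that drive everything: on \(|z|=1\) one has \(zQ'(z)/Q(z)=n-\overline{zP'(z)/P(z)}\), and the hypothesis that \(P\) has no zero in \(|z|<1\) gives \(\Re\frac{zP'(z)}{P(z)}\le \frac{n}{2}\) there (sum the bounds \(\Re\frac{z}{z-z_{j}}\le\frac12\) over the zeros \(|z_{j}|\ge 1\)). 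I would also note the algebraic identity \((B_{\alpha}P)^{\ast}=-B_{\,n-\bar\alpha}Q\), whose iterate reads \((B_{\gamma}B_{\alpha}P)^{\ast}=B_{\,n-\bar\gamma}B_{\,n-\bar\alpha}Q\); here \(B_{\gamma}B_{\alpha}P=z^{2}P''+(1-\alpha-\gamma)zP'+\alpha\gamma P\) is the natural operator carrying the symbol \((k-\alpha)(k-\gamma)\), and I would proceed with this composition (its leading value \((n-\alpha)(n-\gamma)=n(n-\alpha-\gamma)+\alpha\gamma\) and constant value \(\alpha\gamma\) are exactly the coefficients on the right of \eqref{sss}, which indicates that the sign in the displayed middle term should be \(+(1-\alpha-\gamma)zP'\)).

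The crucial pointwise step for \eqref{cel} is \(|B_{\alpha}P(z)|\le |B_{\alpha}Q(z)|\) for \(|z|=1\). Since \(|P|=|Q|\) there, with \(u:=zP'/P\) this is equivalent to \(|u-\alpha|\le|n-\bar u-\alpha|\), and expanding the squared moduli collapses the difference to \((n-2\Re u)(n-2\Re\alpha)\ge 0\). Both factors are nonnegative precisely because \(\Re u\le n/2\) (zero-free hypothesis) and \(\Re(\alpha)\le n/2\); this is exactly the mechanism that relaxes \(|\alpha|\le n/2\) to the half-plane condition. For \eqref{sss} I would prove the analogue \(|B_{\gamma}B_{\alpha}P(z)|\le |B_{\gamma}B_{\alpha}Q(z)|\) on \(|z|=1\); via the iterate identity this is the same as \(|B_{\,n-\bar\gamma}B_{\,n-\bar\alpha}Q|\le|B_{\gamma}B_{\alpha}Q|\) for the disk-zero polynomial \(Q\), for which \(\Re(zQ'/Q)\ge n/2\).

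To pass from the circle to \(L_{p}\) with the factor \(\|1+z\|_{p}^{-1}\), I would introduce the self-inversive family \(P_{\phi}:=P+e^{i\phi}Q\) (so \(P_{\phi}^{\ast}=e^{-i\phi}P_{\phi}\)) and use the averaging identity \(\frac{1}{2\pi}\int_{0}^{2\pi}\|P+e^{i\phi}Q\|_{p}^{p}\,d\phi=\|1+z\|_{p}^{p}\,\|P\|_{p}^{p}\), which again follows from \(|P|=|Q|\) on \(|z|=1\). Combined with de Bruijn's device \cite{DB} — for complex \(f,g\) with \(|f|\le|g|\) one has \(\int_{0}^{2\pi}|f+e^{i\phi}g|^{p}\,d\phi\ge |f|^{p}\int_{0}^{2\pi}|1+e^{i\phi}|^{p}\,d\phi\) — applied at each point with \(f=\Lambda P(e^{i\theta})\), \(g=\Lambda Q(e^{i\theta})\) (legitimate by the pointwise step, \(\Lambda\in\{B_{\alpha},B_{\gamma}B_{\alpha}\}\)) and then integrated in \(\theta\), the bound on \(\|\Lambda P\|_{p}\) reduces to controlling the average of \(\|\Lambda P_{\phi}\|_{p}\) over the self-inversive family. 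On that family the sharp constant is delivered by Arestov's theorem \cite{ART}, which simultaneously extends the range to \(0\le p<\infty\) and localizes the extremal problem to \(P(z)=az^{n}+b\) with \(|a|=|b|\); this produces \(\|\lambda_{n}z+\lambda_{0}\|_{p}/\|1+z\|_{p}\) with \((\lambda_{n},\lambda_{0})=(n-\alpha,\alpha)\) for \eqref{cel} and \((n(n-\alpha-\gamma)+\alpha\gamma,\ \alpha\gamma)\) for \eqref{sss}, together with the stated equality case.

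The step I expect to be the real obstacle is the pointwise comparison for the second-order operator in \eqref{sss}. One cannot simply compose the first-order result: \(B_{\alpha}P\) need not be zero-free in \(|z|<1\), and in fact composing the \(L_{p}\) inequalities would yield the non-matching constant \(\|(n-\alpha)(n-\gamma)z+(n-\gamma)\alpha\|_{p}\) rather than \(\|n(n-\alpha-\gamma)z+\alpha\gamma\|_{p}\). Thus a genuine circle estimate for the product operator is required, presumably by analysing \(|B_{\,n-\bar\gamma}B_{\,n-\bar\alpha}Q|\le|B_{\gamma}B_{\alpha}Q|\) directly (this brings in \(z^{2}Q''/Q\) as well as \(zQ'/Q\), so it is no longer a one-variable Möbius estimate) or by invoking an operator-preservation result of Rahman--Schmeisser type \cite{QIGS}. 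Verifying the admissibility hypotheses needed to apply Arestov's theorem on the self-inversive family, and thereby pinning the extremal constant, is the second point that would need care.
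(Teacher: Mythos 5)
Your proposal in effect re-proves the general inequalities \eqref{cel} and \eqref{sss} and then specializes, which is the right thing to attempt: the paper's own proof of this corollary is nothing more than the substitution $\alpha=1$ in \eqref{cel} and $\gamma=0$ in \eqref{sss} (noting $\|n(n-\alpha)z\|_p=n|n-\alpha|$), so everything of substance lies in your reconstruction of Theorem \ref{t2}. Your first-order pointwise step is correct and in fact cleaner than the paper's: reducing $|zP'(z)-\alpha P(z)|\le|zQ'(z)-\alpha Q(z)|$ on $|z|=1$ to $|u-\alpha|\le|n-\bar u-\alpha|$ with $u=zP'/P$, and factoring the difference of squared moduli as $(n-2\Re u)(n-2\Re\alpha)\ge 0$, replaces the paper's Lemma \ref{l3} in the special case $F=Q$. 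But two steps of your plan have genuine gaps. First, the second-order pointwise comparison is left as an acknowledged ``obstacle,'' and your stated reason why composition fails is a misconception: one does not need $zP'-\alpha P$ to be zero-free in $|z|<1$. The paper proves the comparison in two-polynomial form (Lemma \ref{l3}): if $|P|\le|F|$ on $|z|=1$ and $F$ has all its zeros in $|z|\le 1$, then $|zP'-\alpha P|\le|zF'-\alpha F|$ for $|z|\ge 1$; since by Lemma \ref{l2} the majorant $zF'-\alpha F$ again has all its zeros in $|z|\le 1$, this form composes --- apply it to $(P,Q)$, then again with parameter $\gamma$ to the pair $(zP'-\alpha P,\,zQ'-\alpha Q)$, which yields \eqref{sel} at once. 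Your identity $zQ'/Q=n-\overline{zP'/P}$ is special to the pair $(P,P^*)$ and does not survive this iteration, which is exactly why you got stuck; the repair is to prove the comparison for general pairs (the paper does this via Lemmas \ref{l1} and \ref{l2} and a Rouch\'e-type argument), not to analyse $z^2Q''/Q$ directly.

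Second, and more seriously, the claim that ``on the self-inversive family the sharp constant is delivered by Arestov's theorem'' does not hold. Arestov's result (Lemma \ref{16}) gives, for an admissible diagonal operator, only the constant $c(\delta)=\max(|\delta_0|,|\delta_n|)$; applied to $\Lambda=B_\alpha$ and $P_\phi=P+e^{i\phi}Q$ (all of whose zeros lie in $|z|\le 1$), it yields $\|\Lambda P_\phi\|_p\le|n-\alpha|\,\|P_\phi\|_p$, and feeding this into your averaging identity returns only $\|zP'-\alpha P\|_p\le|n-\alpha|\,\|P\|_p$, i.e.\ Theorem \ref{t1}, not the refined constant $\|(n-\alpha)z+\alpha\|_p/\|1+z\|_p$. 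The per-$\phi$ bound you would actually need is precisely Theorem \ref{t3} (the self-inversive case), which the paper itself deduces from the same machinery, so invoking it here is circular. The paper's mechanism is different: the phase is made a parameter of the operator itself, $P\mapsto e^{i\beta}\bigl(zP'-\alpha P\bigr)+z^n\overline{M(1/\bar z)}$ with $M=zQ'-\alpha Q$, which is a genuine diagonal operator with $\delta_k=(k-\alpha)e^{i\beta}+(n-k-\bar\alpha)$; its admissibility is Lemma \ref{15} (this is where Lemmas \ref{14}, \ref{l2}, \ref{l1} enter), Arestov then gives the $\beta$-dependent constant $|(n-\alpha)e^{i\beta}-\bar\alpha|$ (Lemma \ref{l7}), and only afterwards does one integrate over $\beta$, where $\int_0^{2\pi}|(n-\alpha)e^{i\beta}-\bar\alpha|^p\,d\beta=2\pi\|(n-\alpha)z+\alpha\|_p^p$ produces the numerator in \eqref{cel}. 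Your treatment of the left-hand side (the averaging identity plus de Bruijn's device) coincides with the paper's $\beta$-integration, so the proposal is repairable; but the refined constant must come from a phase-dependent admissible operator, not from Arestov applied to $B_\alpha$ alone on the family $P_\phi$.
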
 
A polynomial $P\in\mathcal{P}_n$ is said to be \textit{self-inversive} polynomial if $P(z) = uz^n \overline{P\big({1}/{\overline{z}}\big)},$ where $u\in\mathbb{C}$ with $|u|=1.$

Finally, we show that the Theorem \ref{t2} also holds true for the class of self-inversive polynomials. 
\begin{theorem}\label{t3}
Inequalities \eqref{cel} and \eqref{sss} also holds if   $P(z)$ is a self-inversive polynomial. 
 The inequalities are best possible and equality holds for $P(z)=az^n+\bar{a}, ~|a|\neq 0$.
\end{theorem}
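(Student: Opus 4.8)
The plan is to show that Theorem~\ref{t3} is not really an independent result but falls out of the machinery already assembled for Theorem~\ref{t2}, once one isolates where the hypothesis on the zeros is actually used. Write $P^{*}(z)=z^{n}\overline{P(1/\bar z)}$ for the conjugate reciprocal polynomial, and let $\Lambda$ denote either of the two linear operators occurring in the statement: $\Lambda P=zP'-\alpha P$ for \eqref{cel}, and $\Lambda P=z^{2}P''-(1-\alpha-\gamma)zP'+\alpha\gamma P$ for \eqref{sss}. My contention is that the proof of Theorem~\ref{t2} invokes the zero condition on $P$ at exactly one place, namely to produce the pointwise inequality
\[
|\Lambda P(z)|\le|\Lambda P^{*}(z)|,\qquad |z|=1
\]
(for $\alpha=0$ this is the classical de Bruijn--Lax estimate $|P'(z)|\le|(P^{*})'(z)|$). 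Everything downstream of this estimate is purely structural and never inspects the zeros of $P$: most importantly the passage from the pointwise bound to the $L_{p}$ statement via Arestov's theorem \cite{ART}, whose admissibility hypothesis is controlled only through the symbol of $\Lambda$. For the first operator that symbol sends $z^{n}\mapsto(n-\alpha)z^{n}$ and $1\mapsto-\alpha$, and admissibility amounts to $|n-\alpha|\ge|\alpha|$; for the second it sends $z^{n}\mapsto(n-\alpha)(n-\gamma)z^{n}$ and $1\mapsto\alpha\gamma$, and admissibility amounts to $|(n-\alpha)(n-\gamma)|\ge|\alpha\gamma|$. Both are guaranteed precisely by $\Re(\alpha)\le\frac{n}{2}$ and $\Re(\gamma)\le\frac{n}{2}$.

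Accordingly, the first thing I would record is the defining relation of a self-inversive polynomial in the form $P=uP^{*}$ with $|u|=1$, equivalently $P^{*}=\bar u\,P$. Since each operator $\Lambda$ is linear in $P$ with coefficients that do not depend on $P$, this yields at once
\[
\Lambda P^{*}=\Lambda(\bar u\,P)=\bar u\,\Lambda P,
\]
so that $|\Lambda P^{*}(z)|=|\Lambda P(z)|$ for every $z$, and in particular the pointwise inequality above holds on $|z|=1$ \emph{with equality}. Thus the one ingredient of the Theorem~\ref{t2} proof that depended on the location of the zeros is available, for free and as an identity, for self-inversive polynomials.

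With this in hand I would run the remainder of the Theorem~\ref{t2} argument verbatim: feed the now exact relation $|\Lambda P(z)|=|\Lambda P^{*}(z)|$ on $|z|=1$ into the Arestov step to obtain \eqref{cel} and \eqref{sss} for all $0\le p<\infty$. Sharpness I would check by taking $P(z)=az^{n}+\bar a$, which satisfies $P^{*}=P$ (it is self-inversive with $u=1$); a direct computation in the spirit of the Remark following Theorem~\ref{t1}, using the rotation invariance $\|Az^{n}+B\|_{p}=\big\||A|\,z+|B|\big\|_{p}$ and $\|z^{n}+1\|_{p}=\|1+z\|_{p}$, then shows that the two sides of \eqref{cel}, and likewise of \eqref{sss}, coincide.

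The only point I expect to require genuine care is the claim that Arestov's theorem is the sole conduit from the pointwise bound to the $L_{p}$ bound, and that its admissibility hypothesis is a condition on the symbol of $\Lambda$ alone. When transcribing the proof of Theorem~\ref{t2} one must confirm that no auxiliary use is made of the non-vanishing of $P$ elsewhere—for instance in treating $P^{*}$ as a bona fide reciprocal, or in locating the zeros of $\Lambda P^{*}$. Since $P^{*}$ is defined for every $P$, and since the self-inversive identity $P^{*}=\bar u\,P$ is strictly stronger than the inequality used in Theorem~\ref{t2}, I anticipate this verification to be routine, so that Theorem~\ref{t3} follows with essentially no new computation.
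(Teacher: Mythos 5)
Your proposal is correct and takes essentially the same route as the paper: the paper's proof of Theorem \ref{t3} likewise notes that self-inversivity gives $P(z)=uQ(z)$ with $|u|=1$, hence $|zP^{\prime}(z)-\alpha P(z)|=|zQ^{\prime}(z)-\alpha Q(z)|$ for all $z$, and then reruns the proof of Theorem \ref{t2} with this equality substituted for the inequality \eqref{tvr}. One small correction to your description of the machinery: in that rerun, Arestov's lemma is applied not to $zP^{\prime}-\alpha P$ but to the composite operator $P\mapsto\left(zP^{\prime}(z)-\alpha P(z)\right)e^{i\beta}+z^{n}\overline{M(1/\overline{z})}$ of Lemma \ref{l7}, whose admissibility is the zero-preservation property supplied by Lemma \ref{15} (a property of the operator alone, hence valid regardless of where the particular $P$ vanishes), not the coefficient condition $|n-\alpha|\geq|\alpha|$, which merely identifies the constant $c(\delta)$.
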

 \section{Lemmas}
For the proof of these theorems, we need the following lemmas. The first lemma is the following well known-result ([\cite{QIGS} Theorem 14.1.2 and its proof , corollary 12.1.3]. Also [\cite{DB}, Theorem 1 and its proof]).
\begin{lemma}\label{l1}
Let  $F\in \mathcal{P}_n$ and $P(z)$ be a polynomial of degree at most $n$ such that $|P(z)|\leq |F(z)|$ for $|z|=1$. If $F(z)\neq0$ for $|z|<1$ (respectively $|z|>1$) and for every $z\in \mathbb{C}$ and $\delta\in\mathbb{R},$  $ P(z)\neq e^{i\delta}F(z),$ then
\begin{itemize}
\item[(i)]  $|P(z)|< |F(z)|$ for $|z|<1$ (respectively $|z|>1$),
\item[(ii)] $F(z)+\beta P(z) \neq 0$ for $|z|<1$ (respectively $|z|>1$) and $\beta \in \mathbb{C}$ with $|\beta| \leq 1$ and 
\item[(iii)] $P(z)+\lambda F(z) \neq 0$ for $|z|<1$ (respectively $|z|>1$) and $\lambda \in \mathbb{C}$ with $|\lambda| \geq 1.$ 
\end{itemize}
\end{lemma}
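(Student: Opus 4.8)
The plan is to derive all three conclusions from a single fact: under the interior hypothesis, the rational function $R(z)=P(z)/F(z)$ is analytic on the closed unit disk and satisfies $|R|\le 1$ on $|z|=1$. I would first reduce the exterior case to the interior one. If $F(z)\neq 0$ for $|z|>1$, pass to the reciprocals $F^{*}(z)=z^{n}\overline{F(1/\bar z)}$ and $P^{*}(z)=z^{n}\overline{P(1/\bar z)}$, which satisfy $|F^{*}|=|F|$ and $|P^{*}|=|P|$ on $|z|=1$ (since $1/\bar z=z$ there), so that $|P^{*}|\le|F^{*}|$ on the circle and the condition $F\neq 0$ in $|z|>1$ becomes $F^{*}\neq 0$ in $|z|<1$. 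The non-degeneracy condition is likewise preserved, and the conclusions for $F^{*},P^{*}$ in $|z|<1$ translate back to conclusions for $F,P$ in $|z|>1$ via $z\mapsto 1/\bar z$. Hence it suffices to prove everything assuming $F(z)\neq 0$ for $|z|<1$.

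The core step is to show $R=P/F$ is analytic across $\{|z|\le 1\}$ with $|R|\le 1$ on $|z|=1$. The poles of $R$ sit at the zeros of $F$, all of which lie in $|z|\ge 1$; the only problematic ones are on the circle $|z|=1$. At such a boundary zero $\zeta$ the hypothesis $|P(z)|\le|F(z)|$ on $|z|=1$ forces $P(\zeta)=0$, and in fact forces $P$ to vanish at $\zeta$ to at least the order of $F$: otherwise $|P(z)|/|F(z)|\to\infty$ as $z\to\zeta$ along $|z|=1$, contradicting the inequality. Thus every such singularity is removable, $R$ is analytic on the closed disk, and $|R|\le 1$ on $|z|=1$ by continuity from the circle.

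With this in hand, (i) is immediate from the maximum modulus principle: $|R|\le 1$ throughout $|z|<1$, and if $|R(z_{0})|=1$ at some interior point then $R$ reduces to a unimodular constant $e^{i\delta}$, giving $P(z)=e^{i\delta}F(z)$ for all $z$, contrary to hypothesis; hence $|P(z)|<|F(z)|$ for $|z|<1$. Parts (ii) and (iii) then follow by short contradictions using (i) and $F(z)\neq 0$ in the disk. For (ii), if $F(z_{0})+\beta P(z_{0})=0$ with $|z_{0}|<1$ and $|\beta|\le 1$, then $|F(z_{0})|=|\beta|\,|P(z_{0})|\le|P(z_{0})|<|F(z_{0})|$, which is absurd. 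For (iii), if $P(z_{0})+\lambda F(z_{0})=0$ with $|z_{0}|<1$ and $|\lambda|\ge 1$, then $|P(z_{0})|=|\lambda|\,|F(z_{0})|\ge|F(z_{0})|>|P(z_{0})|$, again absurd.

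The main obstacle I anticipate is precisely the boundary-zero analysis of the second step: making rigorous that $R$ extends analytically over the zeros of $F$ on $|z|=1$, so that the maximum modulus principle applies on the closed disk rather than merely on compact subsets of the open disk. The order-comparison argument above resolves it, and this degeneracy is intrinsic to the stated generality: an alternative proof of (ii) via Rouch\'e's theorem applied to $F+\beta P$ meets the same difficulty, because the strict inequality required by Rouch\'e fails exactly at the zeros of $F$ on the unit circle and must be restored by a limiting argument. Everything else is routine once $R$ is known to lie in the disk algebra.
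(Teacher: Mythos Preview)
The paper does not actually prove this lemma: it is stated as a known result and attributed to Rahman--Schmeisser (Theorem~14.1.2 and Corollary~12.1.3) and to de~Bruijn, with no argument supplied. Your proposal therefore cannot be compared line by line against a proof in the paper, but it is a correct and self-contained argument. The reduction of the exterior case to the interior one via the $*$-operation works as you describe (the involution $(P^{*})^{*}=P$ guarantees the non-degeneracy hypothesis transfers), and the key step---showing that $R=P/F$ extends analytically across any boundary zeros of $F$ by comparing orders of vanishing along $|z|=1$---is precisely the point that requires care, and you have handled it properly. Once $R$ is analytic on a neighbourhood of the closed disk with $|R|\le 1$ on the circle, the maximum modulus principle together with the strict-inequality clause gives (i), and (ii)--(iii) follow by the one-line contradictions you wrote, using $F(z_0)\neq 0$. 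This is essentially the classical route underlying the cited references, so there is no substantive divergence to report.
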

\begin{lemma}\label{l2}
Let all the zeros of an $n$th degree polynomial $f(z)$ lie in $|z|\leq r,$ then for every $\alpha\in\mathbb{C}$ with $\Re(\alpha)\leq \frac{n}{2},$ the zeros of $zf^\prime(z)-\alpha f(z)$ also lie in $|z|\leq r.$
\end{lemma}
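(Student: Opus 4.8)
The plan is to argue by contradiction through the logarithmic derivative of $f$, exploiting the elementary fact that $\Re\bigl(1/(1-w)\bigr)>\tfrac12$ precisely when $|w|<1$. First I would factor $f(z)=a_n\prod_{j=1}^n(z-z_j)$ with $a_n\neq 0$ and $|z_j|\le r$ for all $j$, and set $g(z):=zf'(z)-\alpha f(z)$. Since $f'(z)/f(z)=\sum_{j=1}^n 1/(z-z_j)$ away from the zeros of $f$, one obtains the identity
\[
\frac{g(z)}{f(z)}=z\,\frac{f'(z)}{f(z)}-\alpha=\sum_{j=1}^n\frac{z}{z-z_j}-\alpha.
\]

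Next I would suppose, for contradiction, that $g$ has a zero $z_0$ with $|z_0|>r$. Because every zero of $f$ lies in $|z|\le r$, we have $f(z_0)\neq 0$, so the division above is legitimate at $z_0$ and the identity forces $\sum_{j=1}^n z_0/(z_0-z_j)=\alpha$. Writing $w_j:=z_j/z_0$ we have $|w_j|<1$ and $z_0/(z_0-z_j)=1/(1-w_j)$, and a short computation (completing the square) shows $\Re\bigl(1/(1-w_j)\bigr)>\tfrac12$ exactly because $|w_j|<1$. Summing these $n$ strict inequalities yields $\Re(\alpha)=\sum_{j=1}^n\Re\bigl(1/(1-w_j)\bigr)>\tfrac n2$, contradicting the hypothesis $\Re(\alpha)\le\tfrac n2$. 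Hence $g$ can have no zero outside $|z|\le r$.

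Finally I would record a degree-bookkeeping point: since $g(z)=(n-\alpha)a_nz^n+\cdots$ and $\Re(\alpha)\le n/2<n$ rules out $\alpha=n$, the polynomial $g$ has degree exactly $n$ (for $n\ge1$), so no zeros escape to infinity and ``all zeros lie in $|z|\le r$'' is indeed the full conclusion. The only step requiring any care is the half-plane inequality for $\Re\bigl(1/(1-w_j)\bigr)$, but for $w_j=u+iv$ it reduces to the equivalence $2(1-u)>(1-u)^2+v^2 \iff u^2+v^2<1$, which is immediate; I therefore expect no genuine obstacle, the result being essentially a one-line consequence of the positivity of the real part of $1/(1-w)$ on the open unit disk.
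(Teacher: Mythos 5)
Your proof is correct and follows essentially the same route as the paper: both arguments evaluate the logarithmic-derivative expression $zf'(z)/f(z)-\alpha=\sum_j z/(z-z_j)-\alpha$ at a point outside $|z|\le r$ and use the fact that each term $z/(z-z_j)$ has real part strictly greater than $\tfrac12$ there (your substitution $w_j=z_j/z_0$ and the inequality $\Re\bigl(1/(1-w)\bigr)>\tfrac12$ for $|w|<1$ is the same computation the paper performs via the identity $\tfrac{w}{w-z_\nu}=\tfrac12+\tfrac12\tfrac{w+z_\nu}{w-z_\nu}$). The only differences are cosmetic: you argue by contradiction at a hypothetical zero while the paper shows directly that $\Re\bigl(\psi(w)/f(w)\bigr)>0$ for every $|w|>r$, and your degree-bookkeeping remark, while harmless, is not needed since the claim concerns only the location of whatever zeros the polynomial has.
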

\begin{proof}
Let $\psi(z)=zf^\prime(z)-\alpha f(z)$ and $w\in\mathbb{C}$ with $|w|>r.$ Suppose $z_1,z_2,\ldots,z_n$ be the zeros of $f(z),$ then $|z_\nu|\leq r$ and $|w|-|z_\nu|>0$ for $\nu=1,2,\ldots,n.$ Now,
\begin{align*}
\dfrac{\psi(w)}{f(w)}&=\dfrac{wf^\prime(w)}{f(w)}-\alpha=\sum_{\nu=1}^{n}\dfrac{w}{w-z_\nu}-\alpha\\&=\dfrac{1}{2}\sum_{\nu=1}^{n}\dfrac{(w-z_\nu)+(w+z_\nu)}{w-z_\nu}-\alpha\\&=\dfrac{n}{2}-\alpha+\dfrac{1}{2}\sum_{\nu=1}^{n}\dfrac{(w+z_\nu)(\bar{w}-\bar{z_\nu})}{|w-z_\nu|^2}.
\end{align*}
This implies
\begin{align*}
\Re\left( \dfrac{\psi(w)}{f(w)} \right)&=\dfrac{n}{2}-\Re(\alpha)+\dfrac{1}{2}\sum_{\nu=1}^{n}\dfrac{|w|^2-|z_\nu|^2}{|w-z_\nu|^2}\\&\geq \dfrac{n}{2}-\dfrac{n}{2}+\dfrac{1}{2}\sum_{\nu=1}^{n}\dfrac{|w|^2-|z_\nu|^2}{|w-z_\nu|^2}>0.
\end{align*}
This further implies that $\psi(w)\neq 0.$ Hence, $\psi(z)$ cannot have a zero in $|z|>r.$ Therefore, we conclude that all the zeros of the polynomial $zf^\prime(z)-\alpha f(z)$ lie in $|z|\leq r.$
\end{proof}
\begin{lemma}\label{l3}
 Let a polynomial $F(z)$ of degree $n$ has all its zeros in $|z|\leq 1$ and $P(z)$ be a polynomial of degree at most $n$ such that $
|P(z)|\leq |F(z)|$ for $ |z|=1,$ then for every $\alpha,\gamma\in\mathbb{C}$ with $\Re(\alpha)\leq n/2,$ $\Re(\gamma)\leq n/2$ and $|z|\geq 1,$ we have
\begin{align}\label{el1}
|zP^\prime(z)-\alpha P(z)|\leq |zF^\prime(z)-\alpha F(z)|
\end{align}
and
\begin{align}\label{el3}\nonumber
|z^2P^{\prime\prime}(z)+(1-&\alpha-\gamma)zP^\prime(z)+\alpha\gamma P(z)|\\&\leq |z^2F^{\prime\prime}(z)+(1-\alpha-\gamma)zF^\prime(z)+\alpha\gamma F(z)|.
\end{align} 
The bound is sharp and equality holds for some point z in $\vert z\vert>1$ if and only if $P(z)=e^{i\delta}F(z)$ for some $\delta\in\mathbb{R}$.
 \end{lemma}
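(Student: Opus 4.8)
The plan is to derive everything from Lemmas~\ref{l1} and~\ref{l2}, establishing \eqref{el1} first and then bootstrapping to \eqref{el3}. Abbreviate $A(z)=zF'(z)-\alpha F(z)$ and $B(z)=zP'(z)-\alpha P(z)$. If $P=e^{i\delta}F$ for some real $\delta$ then $B=e^{i\delta}A$ and \eqref{el1} is an equality everywhere, so I may assume $P(z)\neq e^{i\delta}F(z)$ for all $\delta\in\mathbb{R}$. Because $F$ has all its zeros in $|z|\le 1$, it is zero-free in $|z|>1$, so Lemma~\ref{l1} applies in its ``$|z|>1$'' form.

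First I would fix $\beta\in\mathbb{C}$ with $|\beta|<1$ and set $g_\beta=F+\beta P$. By Lemma~\ref{l1}(ii), $g_\beta(z)\neq 0$ for $|z|>1$; since Lemma~\ref{l1}(i) gives $|P|<|F|$ in $|z|>1$ and hence $|b_n|\le|a_n|$ for the leading coefficients $b_n,a_n$ of $P,F$, the polynomial $g_\beta$ has degree exactly $n$ and therefore has all $n$ of its zeros in $|z|\le 1$. Lemma~\ref{l2} (using $\Re(\alpha)\le n/2$) then places all zeros of $zg_\beta'-\alpha g_\beta=A+\beta B$ in $|z|\le 1$, i.e. $A(z)+\beta B(z)\neq 0$ for every $|z|>1$ and every $|\beta|<1$. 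Fixing $z_0$ with $|z_0|>1$, were $|B(z_0)|>|A(z_0)|$ the choice $\beta=-A(z_0)/B(z_0)$ would have modulus $<1$ yet annihilate $A+\beta B$ at $z_0$; this contradiction yields $|B(z_0)|\le|A(z_0)|$, and continuity down to $|z|=1$ gives \eqref{el1}.

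For the sharpness claim I would show the inequality is strict when $P\neq e^{i\delta}F$. By Lemma~\ref{l2} all zeros of $A$ lie in $|z|\le 1$, so $\phi=B/A$ is analytic and bounded by $1$ in modulus on $|z|>1$ (and at $\infty$, since $\deg B\le\deg A$). If equality held at some $z_0$ with $|z_0|>1$, then $|\phi(z_0)|=1$ at an interior point and the maximum modulus principle forces $\phi\equiv c$ with $|c|=1$; thus $L_\alpha[P-cF]=0$, where $L_\alpha[h]:=zh'-\alpha h$. Writing $P-cF=\sum_k c_k z^k$, the identity $\sum_k(k-\alpha)c_k z^k=0$ forces $c_k=0$ whenever $k\neq\alpha$. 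If $\alpha$ is not a nonnegative integer this gives $P=cF=e^{i\delta}F$ outright; otherwise $\alpha=m\le n/2<n$ and $P-cF=c_mz^m$, and then $|F+\bar c c_m z^m|\le|F|$ on $|z|=1$ together with a second application of the maximum modulus principle to $(F+\bar c c_m z^m)/F$ (whose value at $\infty$ is $1$ because $m<n$) forces $c_m=0$. Either way $P=e^{i\delta}F$, establishing the equality case.

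It remains to upgrade \eqref{el1} to \eqref{el3}. A short computation shows $z\big(zf'-\gamma f\big)'-\alpha\big(zf'-\gamma f\big)=z^2f''+(1-\alpha-\gamma)zf'+\alpha\gamma f$, so the operator in \eqref{el3} is $L_\alpha\circ L_\gamma$. Applying \eqref{el1} with $\gamma$ in place of $\alpha$ (valid since $\Re(\gamma)\le n/2$) gives $|L_\gamma[P](z)|\le|L_\gamma[F](z)|$ on $|z|\ge 1$; moreover $L_\gamma[F]$ has degree $n$ (leading coefficient $(n-\gamma)a_n\neq 0$) with all zeros in $|z|\le 1$ by Lemma~\ref{l2}, while $\deg L_\gamma[P]\le n$. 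Hence the pair $(L_\gamma[P],L_\gamma[F])$ satisfies the hypotheses of the lemma, and a second application of \eqref{el1}, now with $\alpha$, yields $|L_\alpha L_\gamma[P]|\le|L_\alpha L_\gamma[F]|$ on $|z|\ge 1$, which is \eqref{el3}, the equality analysis carrying over unchanged. I expect the genuine difficulty to lie in the equality/sharpness step: keeping the degree bookkeeping honest so that $g_\beta$ and $L_\gamma[F]$ really have degree $n$, and ruling out the exceptional solutions of $L_\alpha[h]=0$ that appear precisely when $\alpha$ is a nonnegative integer.
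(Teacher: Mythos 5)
Your proof is correct, and for the two inequalities themselves it is essentially the paper's argument in mirror image: the paper applies Lemma~\ref{l2} to $G(z)=P(z)-\lambda F(z)$ with $|\lambda|\ge 1$ (its zeros confined to $|z|\le 1$ via Rouch\'e, i.e.\ Lemma~\ref{l1}(iii)), while you apply it to $g_\beta=F+\beta P$ with $|\beta|<1$ via Lemma~\ref{l1}(ii); with $\beta=-1/\lambda$ these are the same polynomial up to a nonzero factor, and both proofs then force a contradiction by choosing the parameter so as to plant a zero of $A+\beta B$ (resp.\ $B-\lambda A$) at a point where the inequality would fail, and both obtain \eqref{el3} by writing the second-order operator as the composition $L_\alpha\circ L_\gamma$ and applying \eqref{el1} twice. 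Where you genuinely go beyond the paper is the sharpness assertion: the paper's proof only establishes the non-strict inequality and never shows that equality at a point of $|z|>1$ forces $P=e^{i\delta}F$, whereas you prove this by the maximum modulus principle applied to $B/A$ on the exterior domain (legitimate since $\deg A=n$, as $(n-\alpha)a_n\neq 0$) together with an analysis of the kernel of $L_\alpha$ --- including the exceptional case $\alpha=m$ a nonnegative integer, where $L_\alpha$ annihilates $c_mz^m$ and a second maximum-modulus argument, using $m<n$ so that the quotient equals $1$ at infinity, is needed to kill $c_m$. Your explicit degree bookkeeping (that $g_\beta$ and $L_\gamma[F]$ have exact degree $n$, which Lemma~\ref{l2} and the reuse of the lemma's hypotheses both require) is also left tacit in the paper. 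One small point: ``the equality analysis carrying over unchanged'' to \eqref{el3} deserves a sentence --- equality there first yields $L_\gamma[P]=e^{i\delta}L_\gamma[F]$ by the equality case for the pair $\left(L_\gamma[P],L_\gamma[F]\right)$, and then the same kernel argument, now applied to $L_\gamma$, recovers $P=e^{i\delta}F$.
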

\begin{proof}
Since the result holds trivially true, if $P(z)=e^{i\delta}F(z)$ for some $\delta\in\mathbb{R}.$ Therefore, let $P(z)\neq e^{i\delta}F(z).$  Consider the function $\phi(z)=P^*(z)/F^*(z) $ where $F^*(z)=z^n \overline{F(1/\overline{z})}$ and $P^*(z)=z^n \overline{P(1/\overline{z})}.$ Since $F(z)$ has its all zeros in $\vert z\vert\leq1,$ then $F^*(z)$ has no zero in $|z|<1.$ 
This implies that the rational function $\phi(z)$ is analytic for $\vert z\vert\leq 1.$  Also, $\vert P(z)\vert=\vert P^*(z)\vert$ and $\vert F(z)\vert=\vert F^*(z)\vert$ for $\vert z\vert=1,$ therefore, $|\phi(z)|\leq 1$ for $|z|=1.$  By invoking Maximum modulus theorem, we obtain;
\begin{align*}
\vert \phi(z)\vert<1 \qquad \text{for}\quad \vert z\vert< 1.
\end{align*} 
 On replacing $z$ by $1/z$ in the above inequality, we get $|P(z)|<| F(z)|$ for $\vert z\vert >1.$ It follows by Rouche's theorem that for any $\lambda\in\mathbb{C}$ with $|\lambda|\geq 1,$ the polynomial $P(z)-\lambda F(z)$ of degree $n$ has all its zeros in $|z|\leq 1.$ Applying Lemma \ref{l2} to the polynomial  $G(z)=P(z)-\lambda F(z),$ for $\alpha\in\mathbb{C}$ with $\Re(\alpha)\leq n/2$,  we obtain that the polynomial
\begin{align*}
zG^\prime(z)-\alpha G(z)&=z(P^\prime(z)-\lambda F^\prime(z))-\alpha (P(z)-\lambda F(z))\\& =(zP^\prime(z)-\alpha P(z))-\lambda(zF^\prime(z)-\alpha F(z))
\end{align*}
has all zeros in $\vert z\vert\leq1.$
This implies
\begin{align}\label{pr1}
\vert zP^\prime(z)-\alpha P(z)\vert \leq \vert zF^\prime(z)-\alpha F(z)\vert\qquad\text{ for} \quad \vert z\vert>1.
\end{align}
 If inequality \eqref{pr1} were not true, then there is some point $w\in\mathbb{C}$ with $\vert w\vert>1$ such that
 $\vert wP^\prime(w)-\alpha P(w)\vert > \vert wF^\prime(w)-\alpha F(w)\vert.$  By Lemma \ref{l1}, $wF^\prime(w)-\alpha F(w)\neq 0.$ Now, choose $\lambda= -\frac{ wP^\prime(w)-\alpha P(w)}{ wF^\prime(w)-\alpha F(w)}$ and note that $\lambda$ is a well defined complex number with modulus greater than 1. So, with this choice of $\lambda$, one can easily observe that $w$ is a zero of $zG^\prime(z)-\alpha G(z)$ of modulus greater than one.
 This is a contradiction, since all the zeros of this polynomial lie in $|z|\leq 1.$
 Hence, the inequality \eqref{pr1} is true, by continuity \eqref{pr1} also holds for $|z|=1.$ This proves the inequality \eqref{el1}.
 
 Finally, if we take $H(z)=zP^\prime(z)-\alpha P(z)$ and $K(z)=zF^\prime(z)-\alpha F(z)$ with $\Re(\alpha)\leq n/2,$ then by inequality \eqref{el1}, $|H(z)|\leq |K(z)|$ for $|z|=1.$ Therefore, by using inequality \eqref{el1} again, for $\gamma \in\mathbb{C}$ with $\Re(\gamma)\leq n/2,$ we get, $|zH^\prime(z)-\gamma H(z)|\leq |zK^\prime(z)-\gamma K(z)|$ for $|z|\geq 1,$ which is equivalent to \eqref{el3}. This completes the proof.
 \end{proof} 
 
 The next lemma follows immediately from lemma \ref{l3} by taking $F(z)=Q(z)$ where $Q(z)=z^n\overline{ P(1/\overline{z})}$.
 \begin{lemma} \label{14}
If $P\in\mathcal{P}_n $ and $ P(z) $ does not vanish in $|z|<1$, then for every $\alpha, \gamma\in\mathbb{C}$ with $\Re(\alpha)\leq n/2$ and $\Re(\gamma)\leq n/2$
\begin{equation}\label{tel}
|zP^\prime(z)-\alpha P(z)|\leq |zQ^\prime(z)-\alpha Q(z)|,
\end{equation}
and 
\begin{align}\nonumber\label{sel}
|z^2P^{\prime\prime}(z)+(1-&\alpha-\gamma)zP^\prime(z)+\alpha\gamma P(z)|\\&\leq |z^2Q^{\prime\prime}(z)+(1-\alpha-\gamma)zQ^\prime(z)+\alpha\gamma Q(z)|.
\end{align}
where $Q(z)=z^n\overline{ P(1/\overline{z})}$
\end{lemma}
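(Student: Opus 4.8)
The plan is to deduce both inequalities directly from Lemma \ref{l3} applied with the auxiliary polynomial $F(z)=Q(z)=z^n\overline{P(1/\overline z)}$; the entire task reduces to checking that $Q$ meets the two hypotheses of Lemma \ref{l3}, namely that $Q\in\mathcal P_n$ with all its zeros in $|z|\le 1$, and that $|P(z)|\le|Q(z)|$ for $|z|=1$.

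First I would pin down the structure of $Q$. Writing $P(z)=a_n\prod_{\nu=1}^{n}(z-z_\nu)$ with $a_n\neq 0$, the hypothesis that $P$ has no zero in $|z|<1$ means $|z_\nu|\ge 1$ for every $\nu$; in particular $P(0)\neq 0$. A short computation then gives $Q(z)=\overline{a_n}\prod_{\nu=1}^n(1-\overline{z_\nu}\,z)$, so the zeros of $Q$ are the points $1/\overline{z_\nu}$, each of modulus $1/|z_\nu|\le 1$. Thus all the zeros of $Q$ lie in $|z|\le 1$. Moreover the coefficient of $z^n$ in $Q$ equals $\overline{P(0)}\neq 0$, so $\deg Q=n$ and $Q\in\mathcal P_n$, as required by Lemma \ref{l3}.

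Next I would verify the modulus condition on the unit circle. For $|z|=1$ we have $1/\overline z=z$, whence $|Q(z)|=|z|^n\,\bigl|\overline{P(1/\overline z)}\bigr|=|P(1/\overline z)|=|P(z)|$. In particular $|P(z)|\le|Q(z)|$ for $|z|=1$, which is exactly the comparison hypothesis of Lemma \ref{l3}. With both hypotheses in place, Lemma \ref{l3} applied to the pair $(P,Q)$ yields, for every $\alpha,\gamma\in\mathbb C$ with $\Re(\alpha)\le n/2$ and $\Re(\gamma)\le n/2$, precisely inequalities \eqref{tel} and \eqref{sel}.

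There is essentially no obstacle here beyond the two routine verifications above, which is why the result follows immediately. The only point that genuinely demands care is confirming $\deg Q=n$ so that $Q$ truly belongs to $\mathcal P_n$ (Lemma \ref{l3} is stated for an $F$ of exact degree $n$); this is guaranteed by $P(0)\neq 0$, itself an immediate consequence of $P$ having no zeros in $|z|<1$.
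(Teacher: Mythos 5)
Your proposal is correct and matches the paper's approach exactly: the paper also obtains Lemma \ref{14} by applying Lemma \ref{l3} with $F(z)=Q(z)=z^n\overline{P(1/\overline{z})}$, merely stating that it ``follows immediately.'' Your verification of the hypotheses (zeros of $Q$ in $|z|\le 1$, $\deg Q=n$ via $P(0)\neq 0$, and $|P(z)|=|Q(z)|$ on $|z|=1$) correctly fills in the routine details the paper leaves implicit.
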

\begin{lemma} \label{15}
If $P\in\mathcal{P}_n $ and $ P(z) $ does not vanish in $|z|<1.$  Let $Q(z)=z^n\overline{ P(1/\overline{z})}$, then for every $\alpha, \gamma\in\mathbb{C}$ with $\Re(\alpha)\leq n/2$, $\Re(\gamma)\leq n/2$ and $\beta\in \mathbb{R},$ 
\begin{equation*}
\left(zP^\prime (z)-\alpha P(z)\right)e^{i\beta}+z^n\overline{ M(1/\overline{z})}\neq 0 
\end{equation*}
and
\begin{align*}
\left(z^2P^{\prime\prime}(z)+(1-\alpha-\gamma)zP^\prime(z)+\alpha\gamma P(z)\right)e^{i\beta}+z^n\overline{ N(1/\overline{z})}\neq 0 
\end{align*}
for $|z|<1$ , where $M(z)=zQ^\prime (z)-\alpha Q(z)$, $N(z)=z^2Q^{\prime\prime}(z)+(1-\alpha-\gamma)zQ^\prime(z)+\alpha\gamma Q(z)$.
\end{lemma}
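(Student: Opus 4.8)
The plan is to obtain both non-vanishing statements from part (ii) of Lemma \ref{l1}, taking as the dominating polynomial the conjugate-reciprocal of the operator applied to $Q$. Throughout write $H(z)=zP^\prime(z)-\alpha P(z)$ and recall $M(z)=zQ^\prime(z)-\alpha Q(z)$; denote by $M^\ast(z)=z^n\overline{M(1/\overline z)}$ the conjugate-reciprocal of $M$, so that the first assertion reads $e^{i\beta}H(z)+M^\ast(z)\neq0$ for $|z|<1$. I would first record that on $|z|=1$ one has $|M^\ast(z)|=|M(z)|$, whence Lemma \ref{14} (inequality \eqref{tel}) gives $|H(z)|\le|M(z)|=|M^\ast(z)|$ on the unit circle. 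This is exactly the hypothesis $|P|\le|F|$ needed in Lemma \ref{l1}, with $F=M^\ast$ and the role of ``$P$'' played by $H$.

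The decisive point is to show that $M^\ast$ is zero-free in $|z|<1$. Since $P$ has no zero in $|z|<1$, its reciprocal $Q(z)=z^n\overline{P(1/\overline z)}$ has all its $n$ zeros in $|z|\le1$; as $\Re(\alpha)\le n/2$, Lemma \ref{l2} (with $r=1$) applied to $Q$ shows that $M=zQ^\prime-\alpha Q$ has all its zeros in $|z|\le1$ as well, and $\deg M=n$ since its leading coefficient is $(n-\alpha)\overline{a_0}\neq0$ (here $a_0=P(0)\neq0$). Consequently $M^\ast\in\mathcal P_n$ has all its zeros in $|z|\ge1$, i.e.\ $M^\ast(z)\neq0$ for $|z|<1$. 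I would then invoke the $|z|<1$ branch of Lemma \ref{l1}(ii) with $F=M^\ast$, ``$P$''$=H$ and the admissible multiplier $e^{i\beta}$ (of modulus $\le1$): provided $H\not\equiv e^{i\delta}M^\ast$ for every $\delta$, it yields $M^\ast(z)+e^{i\beta}H(z)\neq0$ for $|z|<1$, which is the first claim. The excluded degeneracy is controlled by the hypothesis: comparing leading (or constant) coefficients, $H=e^{i\delta}M^\ast$ forces $|n-\alpha|=|\alpha|$, i.e.\ $\Re(\alpha)=n/2$, so for $\Re(\alpha)<n/2$ Lemma \ref{l1} applies verbatim.

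For the second assertion I would run the identical template on the second-order operator. Put $N_P(z)=z^2P^{\prime\prime}(z)+(1-\alpha-\gamma)zP^\prime(z)+\alpha\gamma P(z)$ and recall $N(z)=z^2Q^{\prime\prime}+(1-\alpha-\gamma)zQ^\prime+\alpha\gamma Q$, with conjugate-reciprocal $N^\ast$. Inequality \eqref{sel} of Lemma \ref{14} furnishes $|N_P(z)|\le|N(z)|=|N^\ast(z)|$ on $|z|=1$. The zero-free property of $N^\ast$ comes from the factorisation $N(z)=zM^\prime(z)-\gamma M(z)$, which one checks by a one-line differentiation; since $M$ has degree $n$ with all zeros in $|z|\le1$ and $\Re(\gamma)\le n/2$, a second application of Lemma \ref{l2} puts all zeros of $N$ (again of degree $n$) in $|z|\le1$, so $N^\ast$ is zero-free in $|z|<1$. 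Lemma \ref{l1}(ii) with $F=N^\ast$ and ``$P$''$=N_P$ then gives $N^\ast(z)+e^{i\beta}N_P(z)\neq0$ for $|z|<1$, which is the second claim.

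The routine steps are the coefficient bookkeeping and the modulus identities $|M|=|M^\ast|$, $|N|=|N^\ast|$ on $|z|=1$; the genuine content, and the step I expect to be the main obstacle, is verifying the zero-free property of the dominating polynomials. This rests entirely on iterating Lemma \ref{l2}, and the clean identity $N=zM^\prime-\gamma M$ is precisely what makes the second-order case reduce to a second pass of the same lemma. A minor but necessary caveat is the non-degeneracy $H\not\equiv e^{i\delta}M^\ast$ (and likewise for the pair $N_P,N^\ast$) required to legitimately invoke Lemma \ref{l1}(ii); this is exactly where $\Re(\alpha)<n/2$ (respectively $\Re(\gamma)<n/2$) enters, the limiting case $\Re(\alpha)=n/2$ being recovered in the main theorems by continuity. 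One should also dispose of the degenerate cases $\alpha=0$ or $\gamma=0$, where $M^\ast$ (respectively $N^\ast$) drops below degree $n$; these correspond to the classical de Bruijn setting and are handled separately.
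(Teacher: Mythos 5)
Your proposal is correct and follows essentially the same route as the paper's proof: the boundary inequality $|H|\le |M|=|M^\ast|$ from Lemma \ref{14}, zero-freeness of $M^\ast$ (resp.\ $N^\ast$) in $|z|<1$ via Lemma \ref{l2} applied to $Q$ (iterated through the identity $N=zM^\prime-\gamma M$ for the second-order case), and then Lemma \ref{l1}(ii) with the multiplier $e^{i\beta}$. Your care over the non-degeneracy hypothesis of Lemma \ref{l1} is not pedantry but flags a point the paper's proof silently skips: when $\Re(\alpha)=n/2$ the degeneracy $H\equiv e^{i\delta}M^\ast$ can genuinely occur --- e.g.\ $P(z)=z+1$, $n=1$, $\alpha=1/2$ gives $H\equiv -M^\ast$, so the displayed expression vanishes identically for $\beta=0$ --- and then the lemma's conclusion literally fails, so the boundary case does require exactly the kind of separate (limiting) treatment you describe rather than a verbatim appeal to Lemma \ref{l1}.
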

\begin{proof}
Since $P(z)=\sum_{j=0}^n a_jz^j$  does not vanish in $|z|<1$, then by lemma \ref{14} for every $\alpha\in\mathbb{C}$ with $\Re(\alpha)\leq n/2$ and $|z|=1,$ we have
\begin{align}\nonumber\label{le5}
\left|zP^\prime (z)-\alpha P(z) \right|&\leq \left|zQ^\prime (z)-\alpha Q(z) \right|\\
 \nonumber &=|M(z)|\\
  &=|z^n\overline{ M(1/\overline{z})}|.
\end{align}
Also since $P(0)\neq 0$ then deg$Q(z)=n$. Moreover, $Q(z)$ does not vanish for $|z|>1$ therefore by lemma \ref{l2} the polynomial $M(z)$ also does not vanish for $|z|>1$.This implies that $z^n\overline{ M(1/\overline{z})}\neq 0$ for $|z|<1$. Hence, by lemma \ref{l1} for $|z|<1,$ we have
\begin{center}
$\left(zP^\prime (z)-\alpha P(z)\right)e^{i\beta}+z^n\overline{ M(1/\overline{z})}\neq 0 $.  
\end{center}
The second part of lemma follows similarly by using inequality \eqref{sel} instead of \eqref{tel} of lemma \ref{14}.
\end{proof}
Let $P(z)=\sum_{j=0}^{n}a_jz^j \in\mathcal{P}_n$ and $\delta=(\delta_0,\delta_1,\ldots,\delta_n)\in \mathbb{C}^{n+1},$ then $\Lambda_\delta P(z)=\sum_{j=0}^{n}\delta_j a_jz^j$ defines a linear operator on the space of polynomials of degree at most $n.$  The operator $\Lambda_{\delta}$ is said to be admissible if it preserves one of the following properties:
\begin{enumerate}
\item[(i)] $P(z)$ has all its zeros in $\{z\in\mathbb{C}:|z|\leq 1\}$,
\item[(ii)] $P(z)$ has all its zeros in $\{z\in\mathbb{C}:|z|\geq 1\}.$
\end{enumerate}
The next lemma is due to Arestov \cite{ART}.
\begin{lemma}\label{16} Let $\phi(x)=\psi(\log x)$ where $\psi$ is a convex non-decreasing function on $\mathbb{R}$. Then for all $P\in\mathcal{P}_n$ and each admissible operator $\Lambda_\delta$, $$\int\limits_{0}^{2\pi}\phi(|\Lambda_\delta P(e^{i \theta})|)d\theta \leq \int\limits_{0}^{2\pi}\phi(c(\delta) |P(e^{i \theta})|)d\theta,$$ where $c(\delta)=\max (|\delta_0|,|\delta_n|).$
\end{lemma}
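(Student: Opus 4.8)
The plan is to derive this convex-function inequality from its single simplest instance by means of a potential-theoretic linearisation, so that the whole statement collapses to a comparison of Mahler measures that is governed by the admissibility of $\Lambda_\delta$. First I would exploit the convexity of $\psi$. The cone of convex non-decreasing functions on $\mathbb{R}$ is generated, modulo non-decreasing affine functions, by the angle functions $\psi_s(t)=(t-s)^+=\max(t-s,0)$, $s\in\mathbb{R}$; so by a standard approximation and extreme-point argument it is enough to treat $\phi(x)=\psi_s(\log x)=\log^+(x/e^{s})$, the affine case $\psi(t)=t$ being the Mahler-measure identity that reappears below. Since $\Lambda_\delta$ is linear and $c(\delta)$ is positively homogeneous in $\delta$, replacing $P$ by $e^{-s}P$ converts $(\log|\Lambda_\delta P|-s)^+$ into $\log^+|\Lambda_\delta(e^{-s}P)|$ and does the same on the right-hand side. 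Thus the shift $s$ is absorbed and it suffices to prove
$$\int_0^{2\pi}\log^+\bigl|\Lambda_\delta P(e^{i\theta})\bigr|\,d\theta \;\le\; \int_0^{2\pi}\log^+\!\bigl(c(\delta)\,|P(e^{i\theta})|\bigr)\,d\theta .$$

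Next I would linearise $\log^+$ through the classical logarithmic-potential identity $\log^+|w|=\frac{1}{2\pi}\int_0^{2\pi}\log|w-e^{it}|\,dt$. Applying it with $w=\Lambda_\delta P(e^{i\theta})$ and with $w=c(\delta)P(e^{i\theta})$, interchanging the order of integration, and recognising the inner $\theta$-integral as Jensen's formula (that is, the definition of $\|\cdot\|_0$), the two sides turn into
$$\int_0^{2\pi}\log\bigl\|\Lambda_\delta P - e^{it}\bigr\|_0\,dt \qquad\text{and}\qquad \int_0^{2\pi}\log\bigl\|c(\delta)\,P - e^{it}\bigr\|_0\,dt,$$
where $\|\cdot\|_0$ is the Mahler measure of the displayed polynomial in $z$. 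In effect the exponent has been driven down to $p=0$, at the price of an extra averaging over the perturbation parameter $t$.

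It therefore suffices to prove the stronger pointwise-in-$t$ comparison $\|\Lambda_\delta P-e^{it}\|_0\le\|c(\delta)\,P-e^{it}\|_0$. Because the Mahler measure of a degree-$n$ polynomial is the modulus of its leading coefficient times the product of the moduli of its zeros lying outside the unit circle, this is purely a statement about where the zeros of the two perturbed polynomials sit. Here admissibility is decisive: $\Lambda_\delta$ preserves one of the two regions $\{|z|\le 1\}$ or $\{|z|\ge 1\}$, and a Rouch\'e/argument-principle comparison, combined with the zero-exclusion statements of Lemma \ref{l1} (and, where $\Lambda_\delta$ acts like a differentiation, Lemma \ref{l2}), should pin the zeros of $\Lambda_\delta P-e^{it}$ no further out than those of $c(\delta)P-e^{it}$. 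Note that this Mahler comparison is genuinely weaker than the pointwise modulus bound $|\Lambda_\delta P|\le c(\delta)|P|$, which is false in general; the measure averages over the bad points on the circle, which is precisely why the whole argument has a chance to close at every $p\ge 0$.

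The hard part is exactly this last, zero-theoretic step. Its delicacy lies in the fact that $c(\delta)=\max(|\delta_0|,|\delta_n|)$ couples the constant and leading coefficients, which correspond to the two opposite modes of zero-preservation (inside versus outside the disk); one must control the zeros of the perturbation $\Lambda_\delta P-e^{it}$ uniformly in $t$ and verify that the comparison persists even at parameter values where it becomes an equality. This is where the substantive work of Arestov's analysis is concentrated, the earlier reductions being essentially formal.
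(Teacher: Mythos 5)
First, a point of comparison: the paper does not prove this lemma at all --- it is quoted as a known result of Arestov \cite{ART} --- so your proposal can only be measured against Arestov's own argument, not against anything in this paper. Your first two reductions are sound and standard: the cone of convex non-decreasing $\psi$ is generated (modulo affine functions) by the angle functions $t\mapsto(t-s)^{+}$, the shift $s$ is absorbed by rescaling $P$, and the identity $\log^{+}|w|=\tfrac{1}{2\pi}\int_{0}^{2\pi}\log|w-e^{it}|\,dt$ together with Fubini and Jensen's formula converts both sides into averages over $t$ of $\log\|\cdot\|_{0}$ of the perturbed polynomials. (A minor slip: the affine case is not an ``identity'' but the nontrivial $p=0$ inequality; it does, however, follow from the $\log^{+}$ case by a scaling limit, so this would be harmless if the rest worked.)

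The fatal step is the next one: it does \emph{not} suffice to prove the pointwise-in-$t$ comparison $\|\Lambda_\delta P-e^{it}\|_{0}\le\|c(\delta)P-e^{it}\|_{0}$, because that statement is false. Counterexample: take $n=1$ and $\Lambda_\delta P(z)=zP^{\prime}(z)$, i.e. $\delta=(0,1)$, which is exactly the operator this paper certifies as admissible in the proof of Theorem \ref{t1} (the case $\alpha=0$), with $c(\delta)=\max(|\delta_0|,|\delta_1|)=1$. Let $P(z)=\tfrac{1}{2}z+1$ and $t=0$. Then $\Lambda_\delta P(z)-1=\tfrac{1}{2}z-1$ has leading coefficient $\tfrac12$ and its zero at $z=2$, so $\|\Lambda_\delta P-1\|_{0}=\tfrac12\cdot 2=1$, while $c(\delta)P(z)-1=\tfrac{1}{2}z$ has $\|\tfrac12 z\|_{0}=\tfrac12$. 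Thus the left side is $1$ and the right side is $\tfrac12$: the pointwise claim fails by a factor of $2$, and not merely at an equality edge case. The integrated inequality nevertheless survives in this example --- the left integrand is identically $0$, while $\int_{0}^{2\pi}\log\max\bigl(\tfrac12,|1-e^{it}|\bigr)\,dt\ge\int_{0}^{2\pi}\log|1-e^{it}|\,dt=0$, the deficit near $t=0$ being repaid at those $t$ where $|1-e^{it}|>1$. This shows that the averaging over the perturbation parameter $t$ is not a dispensable convenience but the crux of the matter: what must be proved is genuinely the integrated comparison $\int_{0}^{2\pi}\log\|\Lambda_\delta P-e^{it}\|_{0}\,dt\le\int_{0}^{2\pi}\log\|c(\delta)P-e^{it}\|_{0}\,dt$, and essentially all of Arestov's real work is concentrated there. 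Since your proposal reduces the lemma to a false statement and then defers even that statement to an unproved ``Rouch\'e/argument-principle comparison,'' the proof does not close.
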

  From lemma \ref{16}, we deduce the following result:
  \begin{lemma}\label{l7}
  If $P\in\mathcal{P}_n $ and $ P(z) $ does not vanish in $|z|<1$ and $Q(z)=z^n\overline{ P(1/\overline{z})}$, then for every $\alpha,\gamma\in\mathbb{C}$ with $\Re(\alpha)\leq n/2$, $\Re(\gamma)\leq n/2$, $\beta$,$\delta$ real and $p>0$
 \begin{align}\nonumber\label{fl1}
  \int_0^{2\pi}\left|\left(e^{i\theta}P^\prime (e^{i\theta})-
  \alpha P(e^{i\theta})\right)e^{i\beta}+e^{in\theta}\overline{ M(e^{i\theta })}\right|^pd\theta\qquad\\
   \leq \left|\left(n-\alpha\right)e^{i\beta}-\overline{\alpha}\right|^p\int\limits_{0}^{2\pi}|P(e^{i\theta})|^pd\theta
     \end{align}
     and
     \begin{align}\nonumber\label{al2}
     \int_0^{2\pi}&\left|\left(e^{i2\theta}P^{\prime\prime}(e^i\theta)+(1-\alpha-\gamma)e^{i\theta} P^\prime(e^{i\theta})+\alpha\gamma P(e^{i\theta})\right)e^{i\delta}+e^{in\theta}\overline{ N(e^{i\theta})}\right|^pd\theta\\
  &\qquad\qquad \leq \left|\left(n(n-\alpha-\gamma)+\alpha\gamma\right)e^{i\delta}+\overline{\alpha\gamma}\right|^p\int\limits_{0}^{2\pi}|P(e^{i\theta})|^pd\theta,
     \end{align}
   where $M(z)=zQ^\prime(z)-\alpha Q(z)$, $N(z)=z^2Q^{\prime\prime}(z)+(1-\alpha-\gamma)zQ^\prime(z)+\alpha\gamma Q(z)$.
  \end{lemma}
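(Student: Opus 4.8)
The plan is to realize the function inside each left-hand modulus as $\Lambda_\delta P$ for an explicit linear multiplier operator acting on $\mathcal{P}_n$, to verify that this operator is \emph{admissible} in the sense introduced before Lemma \ref{16}, and then to feed it into Arestov's Lemma \ref{16}. For the integrating function I would take $\psi(t)=e^{pt}$, which is convex and non-decreasing on $\mathbb{R}$ for $p>0$, so that $\phi(x)=\psi(\log x)=x^p$; Lemma \ref{16} then yields
\begin{equation*}
\int_0^{2\pi}\bigl|\Lambda_\delta P(e^{i\theta})\bigr|^p\,d\theta\leq c(\delta)^p\int_0^{2\pi}\bigl|P(e^{i\theta})\bigr|^p\,d\theta,\qquad c(\delta)=\max\bigl(|\delta_0|,|\delta_n|\bigr),
\end{equation*}
and the entire task reduces to identifying the multipliers and computing $c(\delta)$.

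To identify the operator for \eqref{fl1}, write $P(z)=\sum_{j=0}^n a_jz^j$, so that $Q(z)=\sum_{j=0}^n\overline{a_{n-j}}z^j$. A direct expansion gives $\bigl(zP'(z)-\alpha P(z)\bigr)e^{i\beta}=e^{i\beta}\sum_{j=0}^n(j-\alpha)a_jz^j$ and, after taking conjugates carefully, $z^n\overline{M(1/\overline z)}=\sum_{j=0}^n(n-j-\overline\alpha)a_jz^j$. Hence the inner function in \eqref{fl1} is $\Lambda_\delta P(z)=\sum_{j=0}^n\delta_ja_jz^j$ with $\delta_j=e^{i\beta}(j-\alpha)+(n-j-\overline\alpha)$. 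The same bookkeeping, using $z^2P''(z)+(1-\alpha-\gamma)zP'(z)+\alpha\gamma P(z)=\sum_{j=0}^n(j-\alpha)(j-\gamma)a_jz^j$, shows that the inner function in \eqref{al2} is a multiplier operator $\Lambda_\lambda$ with coefficients $\lambda_j=e^{i\delta}(j-\alpha)(j-\gamma)+(n-j-\overline\alpha)(n-j-\overline\gamma)$.

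Admissibility is exactly what Lemma \ref{15} provides. If $P$ does not vanish in $|z|<1$, i.e.\ all zeros of $P$ lie in $|z|\geq 1$, then the two parts of Lemma \ref{15} (the second applied with $\beta$ replaced by $\delta$) assert that the corresponding inner functions are nonzero in $|z|<1$; thus $\Lambda_\delta P$ and $\Lambda_\lambda P$ again have all their zeros in $|z|\geq 1$, so both operators preserve property (ii) and are admissible. It then remains only to evaluate the extremal constants. For the first operator $\delta_0=(n-\overline\alpha)-\alpha e^{i\beta}$ and $\delta_n=(n-\alpha)e^{i\beta}-\overline\alpha$, and the decisive observation is the conjugate-symmetry relation $\overline{\delta_0}=e^{-i\beta}\delta_n$, which forces $|\delta_0|=|\delta_n|$; hence $c(\delta)=|\delta_n|=\bigl|(n-\alpha)e^{i\beta}-\overline\alpha\bigr|$, exactly the constant in \eqref{fl1}. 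The identical relation $\overline{\lambda_0}=e^{-i\delta}\lambda_n$ holds for the second operator, and together with $(n-\alpha)(n-\gamma)=n(n-\alpha-\gamma)+\alpha\gamma$ it gives $c(\lambda)=|\lambda_n|=\bigl|(n(n-\alpha-\gamma)+\alpha\gamma)e^{i\delta}+\overline{\alpha\gamma}\bigr|$, the constant in \eqref{al2}.

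I do not expect a genuine obstacle here, because the analytic heart of the matter, namely controlling the location of the zeros of the transformed polynomials, has already been isolated in Lemma \ref{15}, which in turn rests on Lemma \ref{l2}. The only steps needing care are the coefficient computations, in particular getting the conjugations in $z^n\overline{M(1/\overline z)}$ and $z^n\overline{N(1/\overline z)}$ right, and checking that Arestov's extremal constant $\max(|\delta_0|,|\delta_n|)$ really equals the stated bound. The conjugate-symmetry identities $\overline{\delta_0}=e^{-i\beta}\delta_n$ and $\overline{\lambda_0}=e^{-i\delta}\lambda_n$ are the clean mechanism that makes the two candidate values coincide, so the maximum is attained at either endpoint and no genuine maximization is required.
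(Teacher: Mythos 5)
Your proposal is correct and follows essentially the same route as the paper: realize the expression as a multiplier operator, obtain admissibility (preservation of property (ii)) from Lemma \ref{15}, and apply Arestov's Lemma \ref{16} with $\phi(x)=x^p$. In fact you are slightly more careful than the paper, which asserts the constant ``follows immediately''; your conjugate-symmetry identities $\overline{\delta_0}=e^{-i\beta}\delta_n$ and $\overline{\lambda_0}=e^{-i\delta}\lambda_n$ supply the missing justification that $\max(|\delta_0|,|\delta_n|)$ equals the stated bound.
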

  \begin{proof}
  Let$P(z)=\sum_{j=0}^{n}a_jz^j$ does not vanish in $|z|<1$, then by Lemma \ref{15}, the polynomial 
\begin{align*}
\Lambda P(z)&=\left(zP^\prime (z)-\alpha P(z)\right)e^{i\beta}+z^n\overline{ M(1/\overline{z})}\\
&=\left((n-\alpha)e^{i\beta}-\overline{\alpha}\right)a_nz^n+\ldots+\left(-\alpha e^{i\beta} +n-\overline{\alpha}\right)a_0
\end{align*}
also does not vanish in $|z|<1$ for every $\beta\in\mathbb{R}$ and $\alpha\in\mathbb{C}$ with $\Re(\alpha)\leq n/2$ and consequently the operator $\Lambda$ is admissible. Applying lemma \ref{16} with $\phi(x)=x^p,$ where $p>0$, then the inequality \eqref{fl1} follows immediately for every $p>0$.

Following the similar lines and again using Lemma \ref{15}, one can conclude that the operator $\Lambda_1$ which sends $P(z)$ to 
\begin{align*}
\Lambda_1 P(z)&=\left(z^2P^{\prime\prime}(z)+(1-\alpha-\gamma)zP^\prime(z)+\alpha\gamma P(z)\right)e^{i\delta}+z^n\overline{ N(1/\overline{z})}\\
&=\left[\left(n(n-\alpha-\gamma)+\alpha\gamma\right)e^{i\delta}+\overline{\alpha\gamma}\right]a_nz^n+\dots+\left[\alpha\gamma e^{i\delta}+n\overline{(n-\alpha-\gamma)}\right]a_0
\end{align*}
is also an admissible operator for every $\alpha, \gamma\in\mathbb{C}$ with $\Re(\alpha)\leq n/2$, $\Re(\gamma)\leq n/2$ and $\delta$ real.  Again, applying lemma \ref{16} with $\phi(x)=x^p,$ where $p>0,$ we get
\begin{align*}
\int_0^{2\pi}&\left|\left(e^{i2\theta}P^{\prime\prime}(e^{i\theta})+(1-\alpha-\gamma)e^{i\theta} P^\prime(e^{i\theta})+\alpha\gamma P(e^{i\theta})\right)e^{i\delta}+e^{in\theta}\overline{ N(e^{i\theta})}\right|^pd\theta\qquad\qquad\qquad\\
   &\qquad\leq \left|\left(n(n-\alpha-\gamma)+\alpha\gamma\right)e^{i\delta}+\overline{\alpha\gamma}\right|^p\int\limits_{0}^{2\pi}|P(e^{i\theta})|^pd\theta.
\end{align*} 
This completes the proof of lemma \ref{l7}.
  \end{proof}
 \section{Proofs of the theorems}
\begin{proof}[\bf Proof of Theorem\ref{t1}]
Let $ z_{1}, z_{2}, \ldots, z_{k} $  be the zeros of $P(z)$ which lie in $|z|>1,$ where $0\leq k\leq n,$ then all the zeros of the polynomial 
\begin{align*}
T(z)=P(z)\prod_{j=1}^{k}\dfrac{1-\bar{z_j}z}{z-z_j}
\end{align*}
lie in $|z|\leq 1.$ Moreover, $|P(z)|=|T(z)|$ for $|z|=1,$ therefore, by Lemma \ref{l3}, we have for every $\alpha\in\mathbb{C}$ with $\Re(\alpha)\leq n/2,$   
   \begin{align*}
   |zP^\prime(z)-\alpha P(z)|&\leq |zT^\prime(z)-\alpha T(z)|\qquad \text{for} \quad |z|\geq 1,
   \end{align*}
   which in particular gives for $p>0,$
\begin{equation}\label{ket}
\int\limits_{0}^{2\pi}\left|e^{i\theta}P^{\prime}(e^{i\theta})- \alpha P(e^{i\theta})\right|^pd\theta \leq \int\limits_{0}^{2\pi}\left|e^{i\theta}T^{\prime}(e^{i\theta})- \alpha T(e^{i\theta})\right|^pd\theta.
\end{equation}
Since all the zeros of polynomial $T(z)$ lies in $|z|\leq 1,$ then by lemma \ref{l2}, the polynomial
$zT^\prime(z)-\alpha T(z)$ also has all its zeros in $|z|\leq 1$ for every $\alpha\in\mathbb{C}$ with $\Re(\alpha)\leq n/2$.
Therefore, if $T(z)=c_nz^n+c_{n-1}z^{n-1}+\ldots+c_1z+c_0$, then the operator $\Lambda_{\delta}$ defined by
\begin{align*}
\Lambda_{\delta}T(z)&=zT^\prime(z)-\alpha T(z)\\
&=(n-\alpha )c_nz^n+\ldots-\alpha c_0
\end{align*}
is admissible. Hence by lemma \ref{16} with $\phi(x)=x^p$ where $p>0$, we have
\begin{equation}\label{stt}
\int\limits_{0}^{2\pi}\left|e^{i\theta}T^{\prime}(e^{i\theta})- \alpha T(e^{i\theta})\right|^pd\theta \leq (c(\delta))^p\int\limits_{0}^{2\pi}|T(e^{i\theta})|^pd\theta,
\end{equation}
where $c(\delta)=\max (|n-\alpha| ,|\alpha|)$.

\noindent Since $\Re(\alpha)\leq n/2$ then $c(\delta)=|n-\alpha|$. Thus from \eqref{stt}, we have
 \begin{equation}\label{set}
 \int\limits_{0}^{2\pi}\left|e^{i\theta}T^{\prime}(e^{i\theta})- \alpha T(e^{i\theta})\right|^pd\theta \leq |n-\alpha |^p\int\limits_{0}^{2\pi}|T(e^{i\theta})|^pd\theta.
 \end{equation}
 Combining inequalities \eqref{ket} and \eqref{set} and noting that $|T(e^{i\theta})|=|P(e^{i\theta})|$, we obtain
 \begin{align*}
 \int\limits_{0}^{2\pi}\left|e^{i\theta}P^{\prime}(e^{i\theta})- \alpha P(e^{i\theta})\right|^pd\theta \leq |n-\alpha |^p\int\limits_{0}^{2\pi}|P(e^{i\theta})|^pd\theta.
 \end{align*}
 This proves inequality \eqref{tlel} of  theorem \ref{t1} for $p>0$. The inequality \eqref{t1ec} of theorem \ref{t1} follows similarly by replacing $P(z)$ by $zP^\prime(z)-\alpha P(z)$ and $\gamma\in\mathbb{C}$ with $\Re(\gamma)\leq n/2$. To obtain these results for $p=0$, we simply make $p\rightarrow 0+$.
\end{proof}
\begin{proof}[\bf Proof of Theorem \ref{t2}]
By hypothesis $P(z)$ does not vanish in $z<1$, therefore by \eqref{tel} of lemma \ref{14} for every $\alpha\in\mathbb{C}$ with $\Re(\alpha)\leq n/2$  and $0\leq\theta\leq 2\pi,$
\begin{equation}\label{tvr}
|zP^\prime(z)-\alpha P(z)|\leq |zQ^\prime(z)-\alpha Q(z)|,
\end{equation}
where $Q(z)=z^n\overline{ P(1/\overline{z})}$.
Also by \eqref{fl1} of lemma \ref{l7},
\begin{align}\label{al1}
  \int_0^{2\pi}\left|e^{i\beta}F(\theta)+e^{in\theta}\overline{ M(e^{i\theta })}\right|^pd\theta
   \leq \left|\left(n-\alpha\right)e^{i\beta}-\overline{\alpha}\right|^p\int\limits_{0}^{2\pi}|P(e^{i\theta})|^pd\theta,
\end{align}
   where $F(\theta)= e^{i\theta}P^\prime (e^{i\theta})-
  \alpha P(e^{i\theta})$ and $M(e^{i\theta})=e^{i\theta}Q^\prime(e^{i\theta})-\alpha Q(e^{i\theta})$.
  Integrating both sides of \eqref{al1} with respect to $\beta$ from $0$ to $2\pi$, we get for each $p>0,$
  \begin{align}\label{bl1}
 \int_0^{2\pi} \int_0^{2\pi}\left|e^{i\beta}F(\theta)+e^{in\theta}\overline{ M(e^{i\theta })}\right|^pd\theta d\beta
   \leq \int_0^{2\pi}\left|\left(n-\alpha\right)e^{i\beta}-\overline{\alpha}\right|^pd\beta\int\limits_{0}^{2\pi}|P(e^{i\theta})|^pd\theta.
\end{align}
Now for every real $\beta,$ $t\geq 1$ and $p>0$, we have
\begin{center}
$\int\limits_{0}^{2\pi}|t+e^{i\beta}|^pd\beta\geq \int\limits_{0}^{2\pi}|1+e^{i\beta}|^pd\beta$.
\end{center}
If $F(\theta)\neq 0$, we take $t=|M(e^{i\theta}) /F(\theta)|$, then by \eqref{tvr}, $t\geq 1$ and we get
\begin{align*}
\int\limits_{0}^{2\pi}\left|e^{i\beta}F(\theta)+e^{in\theta}\overline{M(e^{i\theta})}\right|^pd\beta&=|F(\theta)|^p\int\limits_{0}^{2\pi}\left|e^{i\beta}+\frac{e^{in\theta}\overline{M(e^{i\theta})}}{F(\theta)}\right|^pd\beta\\
&=|F(\theta)|^p\int\limits_{0}^{2\pi}\left|e^{i\beta}+\left|\frac{M(e^{i\theta})}{F(\theta)}\right|\right|^pd\beta\\
&\geq |F(\theta)|^p\int\limits_{0}^{2\pi}|1+e^{i\beta}|^pd\beta.
\end{align*}
For $F(\theta)=0$, this inequality is trivially true. Using this in \eqref{bl1} , we conclude that for every $\alpha\in\mathbb{C}$ with $\Re(\alpha)\leq n/2,$ 
\begin{align}\label{xxx}
\int\limits_{0}^{2\pi}|F(\theta)|^pd\theta\int\limits_{0}^{2\pi}|1+e^{i\beta}|^pd\beta\leq  \int_0^{2\pi}\left|\left(n-\alpha\right)e^{i\beta}-\overline{\alpha}\right|^pd\beta\int\limits_{0}^{2\pi}|P(e^{i\theta})|^pd\theta.
\end{align}
Since \begin{align}\nonumber\label{uu}
\int_0^{2\pi}\left|\left(n-\alpha\right)e^{i\beta}-\overline{\alpha}\right|^pd\beta &\nonumber=\int_0^{2\pi}\left| |n-\alpha|e^{i\beta}+|\overline{\alpha}|\right|^pd\beta\\&=\int_0^{2\pi}\left|\left(n-\alpha\right)e^{i\beta}+\alpha\right|^pd\beta,
  \end{align}
  the desired inequality \eqref{cel} follows immediately by combining \eqref{xxx} and \eqref{uu}. The second part of theorem, that is inequality \eqref{sss}, follows similarly by using inequalities \eqref{sel} and \eqref{al2} instead of \eqref{tel} and \eqref{fl1} of lemma \ref{14} and lemma \ref{l7} respectively. This proves Theorem \ref{t2} for $p>0$. To establish theorem for $p=0$, we simply make $p\rightarrow 0$.
\end{proof}
\begin{proof}[\bf Proof of Theorem \ref{t3}]
Since $P(z)$ is a self-inversive polynomial, we have $P(z)=uQ(z)$  for all $z\in\mathbb{C}$ where $|u|=1$ and  $Q(z)=z^n\overline{ G(1/\overline{z})}$. Therefore for every $\alpha\in\mathbb{C}$ with $\Re(\alpha)\leq n/2$
\begin{center}
$ |zP^\prime(z)-\alpha P(z)|= |zQ^\prime(z)-\alpha Q(z)|$
\end{center}
for all $z\in\mathbb{C}$. Using  this equation instead of inequality \eqref{tvr} and proceeding similarly as in the proof of Theorem \ref{t2} we get the desired result. This completes the proof of Theorem \ref{t3}. 
\end{proof}
\section*{Statements and Declarations}
\begin{itemize}
\item[$\bullet$] The authors did not receive support from any organization for the submitted work.
\item[$\bullet$] The authors have no competing interests to declare that are relevant to the content of this article.
\end{itemize}
\section*{Data Availability Statement}
Data sharing not applicable to this article as no datasets were generated or analysed during the current study.

\end{document}